\documentclass[12pt,reqno]{amsart}
\usepackage[utf8]{inputenc}

\usepackage{amsfonts, amssymb, amsmath, amsthm, mathtools, thmtools}
\usepackage{mathrsfs} % \mathscr
\usepackage{latexsym}% utilisation des symboles LaTeX pour avoir un beau LaTeX
\usepackage{enumerate}
\usepackage{multicol}
\usepackage{times}% font
\usepackage{verbatim}
\usepackage{tikz-cd}
\usepackage{fullpage}
\usepackage{here}
\usepackage{url}
\usepackage{csquotes}
\usepackage{placeins}
\usepackage{epic}
\usepackage{graphicx}
\usepackage{epstopdf}
\usepackage{pstricks}
\usepackage{pst-plot}
\usepackage{tikz}
\usepackage{xcolor}
\usepackage{graphicx}
\usepackage{subcaption}

\usetikzlibrary{shapes.geometric, positioning, calc}

\input xypic
\xyoption{all}
\xyoption{poly}

\usepackage[colorlinks=true]{hyperref}
\hypersetup{citecolor=blue, linkcolor=blue}

\usepackage[capitalise]{cleveref}

\crefname{equation}{}{}
\crefname{figure}{{\sc Figure}}{{\sc Figure}}
\crefname{subsection}{Subsection}{Subsections}

\usetikzlibrary{matrix,arrows,decorations.pathmorphing}

\usepackage[euler]{textgreek}
\usepackage{tikz,tikz-cd}
\usepackage[colorinlistoftodos, textwidth = 2.3cm]{todonotes}

\newtheorem{theorem}{Theorem}[section]
\newtheorem{proposition}[theorem]{Proposition}
\newtheorem{lemma}[theorem]{Lemma}
\newtheorem{corollary}[theorem]{Corollary}
\newtheorem{conjecture}[theorem]{Conjecture}

\newtheorem*{claim*}{Claim}

\theoremstyle{definition}

\newtheorem{remark}[theorem]{Remark}

\newcommand{\F}{{\mathbb F}}

\usepackage{tabstackengine}
\stackMath

\numberwithin{equation}{section} 
\numberwithin{figure}{section}
\numberwithin{table}{section}

\title{Multiplicative irreducibility of shifted multiplicative subgroups in the extremal case}
\author{Semin Yoo}
\address{Discrete Mathematics Group \\ Institute for Basic Science \\ 55 Expo-ro Yuseong-gu, Daejeon 34126 \\ South Korea}
\email{syoo19@ibs.re.kr}
\subjclass[2020]{11P70, 11B30}
\keywords{multiplicative subgroup, multiplicative decomposition}

\begin{document}

\begin{abstract}
In a recent breakthrough, Kalmynin proved a conjecture of S\'ark\"ozy on additive irreducibility of the set of quadratic residues in a prime field. More recently, Kim, Yip, and Yoo initiated the study of a multiplicative analogue of the conjecture for shifted multiplicative subgroups.
Specifically, they showed that for an odd prime $p$, a proper multiplicative subgroup $G$ of $\F_p^*$, and $\lambda\in G$, there do not exist sets $A,B\subseteq \F_p^*$ with $|A|,|B|\ge 2$ such that $AB=(G-\lambda)\setminus\{0\}$.
In this paper, when $\lambda \in \F_p^* \setminus G$, we completely resolve this problem in the equality case from a Stepanov bound in a prime field.
\end{abstract}

\maketitle

\section{Introduction}

Throughout the paper, let $p$ be a prime, $\F_p$ the finite field with $p$ elements, and $\F_p^*=\F_p\setminus \{0\}$. For two subsets $A,B$ of $\F_p$, we define their \emph{sumset} $A+B=\{a+b: a \in A, \ b\in B\}$ and \emph{product set} $AB=\{ab:a\in A, \ b \in B\}$. 
We denote by $\mathcal R_p$ the set of nonzero squares in $\F_p$ following \cite{LS17, S12, Kalmynin}.

The interaction between additive and multiplicative structures is a fundamental theme in arithmetic combinatorics. A basic question is whether a set with a strong arithmetic structure can be expressed as a nontrivial sumset or product set.
A celebrated conjecture of S\'ark\"ozy \cite{S12} asserts that, for every sufficiently large prime $p$, the set $\mathcal R_p$ cannot be represented in the form $\mathcal R_p=A+B$ with subsets $A,B\subseteq\F_p$ satisfying $|A|,|B|\geq 2$.
Hanson and Petridis \cite{HP} made progress on the conjecture by applying Stepanov's method \cite{S69}, proving that it holds for almost all primes.
In a recent breakthrough, Kalmynin~\cite{Kalmynin} completely resolved the conjecture by combining ingenious new ideas with the methods building on the work of Hanson and Petridis \cite{HP}.
We refer the reader to \cite{S12, S13, S14, S16, LS17, S20, KYY, Y24, Y25, Kalmynin} for related work on this conjecture.

In this paper, we study a multiplicative analogue of this conjecture. This problem was also proposed by S\'{a}rk\"{o}zy.

\begin{conjecture}[{S\'{a}rk\"{o}zy \cite{S14}}]\label{conj:M_S}
Let $p$ be a large enough prime. Then for each $\lambda\in \F_p^*$, the set $(\mathcal{R}_p-\lambda)\setminus\{0\}$ has no nontrivial multiplicative decomposition, that is,  there are no two subsets $A,B$ of $\F_p$ with $|A|,|B|\geq 2$, such that \[AB=(\mathcal{R}_p-\lambda)\setminus\{0\}.\]
\end{conjecture}

Later, Kim, Yip, and Yoo formulated a generalized version of \cref{conj:M_S}.
\begin{conjecture}[{\cite[Conjecture 1.9]{KYY}}]\label{conj:MS2}
Let $d \geq 2$ be fixed. Let $q \equiv 1 \pmod d$ be a sufficiently large prime power. Let $G$ be the multiplicative subgroup of $\F_q$ of index $d$. Then 
for each $\lambda\in\F_q^*$, the set $(G-\lambda)\setminus\{0\}$
has no nontrivial multiplicative decomposition.
\end{conjecture}

In the same paper, Kim, Yip, and Yoo \cite[Theorem 1.11]{KYY} obtained partial progress on \cref{conj:M_S} using Stepanov's method. A key ingredient in their work is the following estimate.

\begin{lemma}[{\cite[Theorem 1.1]{KYY}}]\label{thm: stepanovea}
Let $p$ be a prime and let $G$ be a proper multiplicative subgroup of $\F_p$. Let $A,B \subseteq \F_p^*$ and $\lambda \in \F_p^*$. 
If $AB+\lambda \subseteq G \cup \{0\}$, then 
\begin{equation}\label{eq:ineq1}
|A||B| \leq |G|+|B \cap (-\lambda A^{-1})|+|A|-1.
\end{equation}
Moreover, when $\lambda \in G$, we have a stronger upper bound:
\begin{equation}\label{eq:ineq2}
|A||B| \leq |G|+|B \cap (-\lambda A^{-1})|-1.
\end{equation}
\end{lemma}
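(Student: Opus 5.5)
The plan is to run Stepanov's method in the form used by Hanson and Petridis, adapted to the multiplicative setting. Set $n=|G|$, so $G=\{x\in\F_p^*: x^n=1\}$ and $n\mid p-1$ with $n<p-1$. Put $k=|A|-1$.

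\emph{Construction.} Choose coefficients $(c_a)_{a\in A}$ satisfying
\[
\sum_{a\in A}c_a=1,\qquad \sum_{a\in A}c_a a^{i}=0 \quad (1\le i\le k).
\]
This is a Vandermonde system in the $k+1$ distinct nonzero elements of $A$, so it has a unique solution. Then define
\[
F(x)=\sum_{a\in A}c_a(ax+\lambda)^{n+k}-\lambda^k .
\]
Clearly $\deg F\le n+k$.

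\emph{Vanishing at ordinary points.} I will use Hasse derivatives $E^{(j)}$ throughout, to avoid trouble with factorials modulo $p$. We have
\[
E^{(j)}F(x)=\binom{n+k}{j}\sum_a c_a a^j(ax+\lambda)^{n+k-j}\qquad (j\ge1).
\]
Take $b\in B$ with $ab+\lambda\ne0$ for all $a\in A$. Then $(ab+\lambda)^n=1$, so $(ab+\lambda)^{n+k-j}=(ab+\lambda)^{k-j}$. Expanding $a^j(ab+\lambda)^{k-j}$ binomially produces only moments $\sum_a c_aa^{m}$ with $j\le m\le k$. For $j\ge1$ these all vanish. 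For $j=0$ only the $m=0$ term survives and gives $\lambda^k$, which cancels the constant. Hence $F$ vanishes to order at least $k+1=|A|$ at each such $b$.

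\emph{Vanishing at exceptional points.} Now take $b\in B\cap(-\lambda A^{-1})$. Exactly one $a\in A$ has $ab+\lambda=0$. For that $a$, the true term $(ab+\lambda)^{n+k-j}=0$ differs from the "expected" value $(ab+\lambda)^{k-j}$ only when $j=k$. So $F$ still vanishes to order at least $k$ there, a loss of one.

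\emph{Counting roots.} Summing multiplicities, and provided $F\not\equiv0$, we get
\[
|A|\,|B|-|B\cap(-\lambda A^{-1})|\le \deg F\le n+|A|-1,
\]
which is \eqref{eq:ineq1}.

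\emph{The stronger bound for $\lambda\in G$.} The coefficient of $x^m$ in $F$ for $1\le m\le k$ is $\binom{n+k}{m}\lambda^{n+k-m}\sum_a c_aa^m=0$. The constant term is $\lambda^{n+k}-\lambda^k=\lambda^k(\lambda^n-1)$. When $\lambda\in G$ this constant vanishes too, so $x^{k+1}\mid F$. Thus $0$ is an extra root of multiplicity $|A|$, and $0\notin B$ because $B\subseteq\F_p^*$. Adding these $|A|$ roots to the count gives
\[
|A||B|-|B\cap(-\lambda A^{-1})|+|A|\le n+|A|-1,
\]
which is \eqref{eq:ineq2}.

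\emph{Main obstacle: showing $F\not\equiv0$.} I would argue via the top coefficients. The coefficient of $x^{n+k-i}$ is $\binom{n+k}{i}\lambda^i\sum_a c_aa^{n+k-i}$. If all of these vanished for $0\le i\le k$, then, combined with the defining equations, the vector $(c_a)$ would be orthogonal to too many power-sum rows. I would then argue that the exponents $n,\dots,n+k$ together with $1,\dots,k$ are distinct modulo the orders of the elements of $A$ when $|A|$ is small relative to $p/n$. This needs care with binomial coefficients being divisible by $p$, and may require first reducing to the case $|A|\le|B|$, or to $n+k<p$ (otherwise the bound is trivial since $|A||B|\le p-1$). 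An alternative I would try first: if $F\equiv0$, compare it with the constant term $\lambda^k(\lambda^n-1)$ in the case $\lambda\notin G$, which already gives a contradiction, and treat $\lambda\in G$ separately via the $x^{n+k}$ coefficient.
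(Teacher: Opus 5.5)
Your construction of $F$ is correct: the Hasse-derivative computation gives multiplicity at least $|A|$ at each $b\in B\setminus(-\lambda A^{-1})$ and at least $|A|-1$ at each $b\in B\cap(-\lambda A^{-1})$. When $\lambda\in G$ there is an extra root of multiplicity $|A|$ at $0$. This is exactly the auxiliary polynomial $f$ that the paper recalls from \cite{KYY} and imports together with the fact $f\neq 0$.

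The gap is the step you flag yourself: showing $F\not\equiv 0$. For $\lambda\notin G$ the nonzero constant term $\lambda^k(\lambda^{|G|}-1)$ settles it. For $\lambda\in G$, which \eqref{eq:ineq2} needs (and \eqref{eq:ineq1} in that case), none of your suggestions works.

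\begin{itemize}
\item The $x^{|G|+k}$ coefficient $\sum_a c_a a^{|G|+k}$ can vanish. If $A$ lies in one coset of $G$, so that $a^{|G|}=\gamma$ on $A$, it equals $\gamma\sum_a c_a a^k=0$ for $k\ge 1$. Concretely, $p=7$, $G=\{1,2,4\}$, $\lambda=1$, $A=\{1,2\}$, $B=\{3\}$ satisfies the hypothesis and gives $F(x)=4x^3+2x^2$, of degree $3<|G|+k$.
\item The sketch about exponents being distinct modulo the orders of elements of $A$ is not a proof.
\item The fallback ``$|A||B|\le p-1$'' is unjustified; only $|AB|\le |G|+1$ is available.
\end{itemize}

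The missing idea is to look at the coefficient of $x^{k+1}=x^{|A|}$, namely $\binom{|G|+k}{k+1}\lambda^{|G|-1}\sum_a c_a a^{|A|}$.

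\begin{itemize}
\item Expanding $0=\sum_a c_a\prod_{a'\in A}(a-a')$ in powers of $a$ and using your moment conditions gives $\sum_a c_a a^{|A|}=(-1)^{|A|-1}\prod_{a\in A}a\neq 0$, since $0\notin A$.
\item The binomial coefficient is a unit mod $p$. Fixing $b\in B$, $Ab\subseteq AB\subseteq (G\cup\{0\})-\lambda$ gives $|A|\le |G|+1$. Hence $|G|+k\le 2|G|\le p-1$.
\end{itemize}

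Alternatively, your top-coefficient idea closes with no order considerations once $|G|+k<p$ is known. If the coefficients of $x^{|G|},\dots,x^{|G|+k}$ all vanished, then $\sum_a c_a a^{|G|+i}=0$ for $0\le i\le k$. The matrix $(a^{|G|+i})$ is a Vandermonde matrix times $\mathrm{diag}(a^{|G|})$, hence invertible, which forces $c=0$ and contradicts $\sum_a c_a=1$. With either fix, both inequalities follow from your root count.
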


More recently, Kim, Yip, and Yoo \cite[Theorem 1.10]{KYY26} proved \cref{conj:MS2} under the additional condition that the shift belongs to the subgroup.
\begin{theorem}[{\cite[Theorem 1.10]{KYY26}}]
Let $p$ be an odd prime, let $G$ be a proper multiplicative subgroup of $\F_p^*$, and let $\lambda\in G$. Then there do not exist sets $A,B\subseteq \F_p^*$ with $|A|,|B|\ge 2$ such that
\[
AB=(G-\lambda)\setminus\{0\}.
\]
\end{theorem}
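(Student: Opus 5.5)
The first step is to combine the stronger bound \eqref{eq:ineq2} of \cref{thm: stepanovea} with the trivial lower bound $|AB|\le |A||B|$. Suppose, for contradiction, that $AB=(G-\lambda)\setminus\{0\}$ with $|A|,|B|\ge 2$. Since $G\subseteq \F_p^*$, the element $-\lambda$ is not of the form $g-\lambda$ with $g\in G$, so $-\lambda\notin AB$. Hence $B\cap(-\lambda A^{-1})=\emptyset$, and $AB+\lambda=G\setminus\{\lambda\}\subseteq G\cup\{0\}$. Because $\lambda\in G$, \eqref{eq:ineq2} gives $|A||B|\le |G|-1$. On the other hand $|A||B|\ge |AB|=|G|-1$. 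Therefore
\[
|A||B|=|G|-1=|AB| .
\]
So every element of $(G-\lambda)\setminus\{0\}$ has a \emph{unique} representation $ab$, equivalently $A/A\cap B/B=\{1\}$, and we are exactly in the equality case of the Stepanov bound. Replacing $A$ by $\lambda^{-1}A$ and using $\lambda^{-1}G=G$, I may assume $\lambda=1$, so that $AB=(G-1)\setminus\{0\}$.

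The second step is to reopen the Stepanov argument behind \cref{thm: stepanovea} in this extremal situation. That proof builds an auxiliary polynomial $F(x)$ of controlled degree, roughly $\sum_{a} c_a(x)(ax+1)^{|G|}$ plus lower terms, which vanishes to order about $|A|$ at each $b\in B$. Its degree is then compared with $|B|\cdot(\text{multiplicity})$. In the equality case every inequality in that chain must be tight. Concretely, $F$ must have exactly the prescribed degree and no roots outside $B$ beyond those forced. I would translate this into an exact factorization, namely $F(x)=c\prod_{b\in B}(x-b)^{m}$, or alternatively into the statement that $\prod_{b\in B}(x-b)$ divides a specific combination of the polynomials $(ax+1)^{|G|}-1$.

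The third step is to play this against the symmetric statement with the roles of $A$ and $B$ swapped. I would also use that $\prod_{s\in AB}(x-s)=\big((x+1)^{|G|}-1\big)/x$ exactly, which follows from the uniqueness of representations. Comparing coefficients, for instance the power sums $\sum_{s\in AB}s^k=\big(\sum_a a^k\big)\big(\sum_b b^k\big)$ for small $k$, with those of $\big((x+1)^{|G|}-1\big)/x$ should yield constraints. The target is to show that these constraints force $A$ or $B$ to be a coset of a subgroup in a way incompatible with $|A|,|B|\ge 2$. Useful facts here: the power sums of $G-1$ are binomial-type expressions in $|G|$, and the power sums of a product set with unique representation factor.

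The main obstacle is this last extraction of a contradiction from the equality case. The counting in the first step only gives $|A||B|=|G|-1$ with unique factorization, which is not contradictory by itself. The first thing I would try is the power-sum factorization for $k=1,2$, which gives $\big(\sum a\big)\big(\sum b\big)=-(|G|-1)$ together with a quadratic relation. I would combine these with the exactness of the Stepanov polynomial to force $A$ (or $B$) into a geometric progression, and then check directly that such a set cannot work.
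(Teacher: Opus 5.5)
\textbf{Verdict: genuine gap.} Your first step is correct. It is exactly the reduction the paper records when it cites this theorem: $-\lambda\notin AB$ gives $B\cap(-\lambda A^{-1})=\varnothing$, so \eqref{eq:ineq2} together with $|AB|=|G|-1$ forces $|A||B|=|G|-1$ with unique representation. The paper does not reprove the rest; the contradiction is in \cite{KYY26}, obtained by running the Stepanov-polynomial techniques of \cite{KYY} in this exact case.

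Your proposal does not supply a substitute for that part. After the equality, which you rightly note is not contradictory by itself, everything is a list of things to try, and no step produces a contradiction.

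The power-sum plan in particular cannot work as stated. With $\lambda=1$ and $1\le k\le |G|-1$, one has $\sum_{s\in AB}s^k=\sum_{g\in G}(g-1)^k=(-1)^k|G|$, i.e.\ $p_k(A)p_k(B)=(-1)^k|G|$. For $k=1$ the value is $-|G|$, not $-(|G|-1)$. Since $|A||B|=|G|-1<p$, Newton's identities show that this whole family of identities is \emph{equivalent} to $\prod_{a\in A,b\in B}(x-ab)=((x+1)^{|G|}-1)/x$, which is the hypothesis itself. So the two scalar equations at $k=1,2$ discard almost all of the information and cannot give a contradiction on their own. You never say what extra input would force $A$ or $B$ into a geometric progression, nor why a geometric progression would then fail.

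Step 2 also states the extremal factorization incorrectly. Write $n=|A|$, $m=|B|$, and let $f(x)=-\lambda^{n-1}+\sum_i c_i(a_ix+\lambda)^{|G|+n-1}$. For $\lambda\in G$ we have $f(0)=\lambda^{n-1}(\lambda^{|G|}-1)=0$, and the coefficients of $x,\dots,x^{n-1}$ vanish by \eqref{system}. Hence $x^{n}\mid f$, and this extra zero of order $|A|$ at the origin is exactly what improves \eqref{eq:ineq1} to \eqref{eq:ineq2}. Equality therefore means
\[
f(x)=Cx^{n}\prod_{b\in B}(x-b)^{n},
\]
of degree $n(m+1)=|G|+n-1$. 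Your version $c\prod_{b\in B}(x-b)^m$ omits the factor $x^{|A|}$.

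From the correct identity, one natural way to proceed mirrors \cref{sec:3}. Compare $f^{(n)}(b)$, $f^{(n+1)}(b)$, $f^{(n+2)}(b)$ computed from the definition (using $(a_ib+\lambda)^{|G|}=1$ and \cref{lem:GF}) with the same derivatives computed from the factorization; the latter now carries an extra $n/b$ term in the logarithmic derivative. This yields identities of the type of Relations X and Y. Turn these into polynomial divisibilities in $P$ and $Q$, then compare degrees and leading coefficients. A mechanism of this kind, converting the exact equality into a contradiction, is the missing idea.

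If you did reach the conclusion that $A$ or $B$ is a coset $\alpha H$ of a subgroup $H$, the finish is immediate. Then $AB$ is $H$-invariant, so $h\sum_{s\in AB}s=\sum_{s\in AB}s=-|G|\neq 0$ for every $h\in H$. This forces $H=\{1\}$, contradicting $|A|,|B|\ge 2$.
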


In their proof, the assumption $\lambda \in G$ is crucial.
Indeed, if $AB=(G -\lambda) \setminus\{0\}$, then one can see that 
$B\cap(-\lambda A^{-1})=\varnothing$. 
By inequality \cref{eq:ineq2}, we have
\[
|G|-1=|AB|\leq |A||B|\leq |G|-1,
\]
and therefore \[
|A||B|=|G|-1.
\]
This exact cardinality relation makes it possible to apply the
techniques developed in \cite{KYY}.

On the other hand, the situation appears to be substantially more difficult when $\lambda\in \F_p^* \setminus G$.
Assuming $AB=(G-\lambda)\setminus\{0\}=G-\lambda$, one can see that
$B\cap(-\lambda A^{-1})=\varnothing$ and $A \cap (-\lambda B^{-1})=\varnothing$.
It follows from inequality \eqref{eq:ineq1} that
\[
|G|=|AB| \leq |A||B| \leq |G|+\min\{|A|,|B|\}-1.
\]
Thus $|A||B| = |G|+\min\{|A|,|B|\}-1$ is the extremal case of the Stepanov bound. (For more details, see \cref{sec:2}.)

It is also known that any hypothetical nontrivial multiplicative decomposition of a shifted subgroup must be essentially balanced. 
S\'ark\"ozy first obtained square root scale estimates in the case of shifted quadratic residues \cite[Theorem~1]{S14}. 
More generally, over prime fields, for an arbitrary proper multiplicative subgroup $G$, Shkredov proved $|A|,|B|=|G|^{1/2+o(1)}$ whenever $AB=(G-\lambda)\setminus\{0\}$ as $|G|\to\infty$; see \cite[Lemma~17]{S20}.
Kim, Yip, and Yoo also proved that, for every $\varepsilon>0$ and odd prime power $q$, if $G$ is the subgroup of $\F_q^*$ of index $d$, where $2\le d\le q^{1/2-\varepsilon}$, and $AB=(G-\lambda)\setminus\{0\}$ with $|A|,|B|\ge2$, then
\[
\frac{\sqrt q}{d}\ll_{\varepsilon}\min\{|A|,|B|\}\le\max\{|A|,|B|\}\ll_{\varepsilon}\sqrt q;
\]
see \cite[Proposition~3.4]{KYY}. In particular, when $d$ is fixed,
both $A$ and $B$ have order of magnitude $\sqrt q$.
Thus, any hypothetical decomposition is asymptotically balanced at the square root scale.

In this paper, we establish a further rigidity phenomenon by completely resolving the equality case of the Stepanov bound when $\lambda\in\F_p^*\setminus G$.
Our main theorem is the following.

\begin{theorem}\label{thm:delta-zero}
Let $p$ be an odd prime and let $G$ be a proper multiplicative subgroup of $\F_p$, and let  $\lambda \in \F_p^* \setminus G$. Suppose that $A,B\subseteq\F_p^*$ satisfy $|A|\geq 2$ and $|B|\geq 2$, and $AB=G-\lambda$ with $|A||B|=|G|+\min\{|A|,|B|\}-1$. Then we have 
\[p=11, \qquad G=\mathcal R_{11}, \qquad \text{and} \qquad \{|A|,|B|\}=\{2,3\}.\] 
In particular, if $p\neq 11$, then there do not exist subsets
$A,B\subseteq\F_p^*$ with $|A|,|B|\geq 2$ and $|A||B|=|G|+\min\{|A|,|B|\}-1$ such that
\[
AB=G - \lambda.
\]
\end{theorem}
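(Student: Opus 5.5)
Set $a=|A|$, $b=|B|$, $n=|G|$, and assume by symmetry $a\le b$, so that $ab=n+a-1$. The plan is to turn this single equation into rigid combinatorial structure and then show that the structure is impossible except in a very small case.

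\textbf{Step 1 (collision count).} Since $AB=G-\lambda$ has exactly $n$ elements, the map $A\times B\to AB$ has exactly $ab-n=a-1$ ``excess'' collisions. On the other hand, multiplicative energy supplies collisions. For each ratio $r=a_1/a_2\ne1$ with $a_i\in A$, the representations $a_1b_1=a_2b_2$ correspond to elements of $B\cap rB$. Summing, the total excess $\sum_x(r(x)-1)$ equals $a-1$.

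\textbf{Step 2 (small configurations).} I would then derive a matching lower bound on the number of collisions. Fix $a_0\in A$. The sets $a_0B$ and $a'B$ for $a'\ne a_0$ all lie in $G-\lambda$, and the union of the $a$ translates $a'B$ has size $n=ab-(a-1)$. Counting by inclusion--exclusion along a chain gives a lower bound on the excess of roughly the number of overlapping pairs. The goal is to show that the overlaps form a ``tree'': each new row $a'B$ meets the previous union in exactly one element, with no further coincidences. That is a very rigid structure. It would then combine with the square-root balance, namely $n\approx ab$ and $a\le b$, hence $a\le\sqrt{n}+1$.

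\textbf{Step 3 (exploit $G$-structure).} Since $G-\lambda=AB$ and $G$ is a group, for every $g\in G$ the set $g(G-\lambda)=G-g\lambda$ is a shifted copy. Every $x\in AB$ satisfies $x+\lambda\in G$. I would use the character sum or Stepanov-type argument implicit in \cref{thm: stepanovea} in its equality case: in Stepanov's method, equality forces the auxiliary polynomial to vanish exactly with the predicted multiplicities and nowhere else. Concretely, equality in \eqref{eq:ineq1} should force $(x+\lambda)^{n}-1$, which is $\prod_{y\in AB}(x-y)$ since $AB=G-\lambda$ (note $0\notin G-\lambda$ because $\lambda\notin G$), to coincide with a specific combination built from $A$ and $B$. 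Comparing coefficients, for instance of $x^{n-1}$ and $x^{n-2}$, gives power-sum identities such as $\sum_{y\in AB}y=-n\lambda$, with $\sum_{y\in AB} y$ expressed via $(\sum A)(\sum B)$ minus a correction from the $a-1$ collisions. These identities should force $a$, $b$ and $n$ to be tiny, ideally $a=2$. Then $b=n+1-b$, giving $n=2b-1$ and $b(b-1)\mid$ structure. Finally, I would check directly that $n=5$, $p=11$ works and that other small cases fail by hand.

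\textbf{Main obstacle.} The main obstacle is Step 3: extracting from the equality case of the Stepanov bound enough polynomial identities to bound $a$ absolutely. I would first try to revisit the Stepanov proof, where the auxiliary polynomial has degree bound $\approx n+a-1$ and vanishing order $b$ at each of the $a$ points. Equality should force the auxiliary polynomial to be essentially a product that is exactly determined, and from this I would derive $a=2$ via a degree or coefficient mismatch when $a\ge3$.
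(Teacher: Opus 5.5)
\textbf{There is a genuine gap: the proposal never reaches the mechanism that drives the proof.}

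Your Steps 1 and 2 add no information. The excess count $\sum_x(r(x)-1)=a-1$ is just a restatement of $ab=|G|+a-1$. The ``tree'' structure of overlapping rows $\alpha B$ is stated as a goal and never derived. Even if it held, it is a purely combinatorial property of the rows that makes no use of $G$ or $p$, so it cannot bound $a$, $b$ or $p$.

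Everything therefore rests on Step 3, and there the key identity is misidentified. The factorization $\prod_{y\in AB}(x-y)=(x+\lambda)^{|G|}-1$ holds for every decomposition $AB=G-\lambda$, not specifically in the equality case. Matching its coefficients against $(\sum A)(\sum B)$ and higher analogues requires knowing which products collide, and you do not control that. Also, the auxiliary polynomial $f(x)=-\lambda^{a-1}+\sum_i c_i(\alpha_ix+\lambda)^{|G|+a-1}$ (with $A=\{\alpha_1,\dots,\alpha_a\}$) vanishes to order $a$ at each of the $b$ points of $B$. It does not vanish to order $b$ at the points of $A$; the former is what produces the $\min$ in the bound.

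What equality really forces is $f=C\,P^{a}$ with $P(x)=\prod_{\beta\in B}(x-\beta)$. The proof runs on two consequences of this.
\begin{enumerate}
\item Since $f(x)=f(0)+O(x^a)$ with $f(0)=\lambda^{a-1}(\lambda^{|G|}-1)\neq 0$, one gets $\sum_{\beta\in B}\beta^{-t}=0$ for $1\le t\le a-1$, i.e.\ $P(x)=P(0)+O(x^a)$.
\item Computing $f^{(a)},f^{(a+1)},f^{(a+2)}$ at $\beta\in B$, using $(\alpha_i\beta+\lambda)^{|G|}=1$ and the generating function for the $c_i$, yields Relations X and Y. These tie $\sum_{c\in J}(\beta+c)^{-k}$, with $J=\lambda A^{-1}$ and $k=1,2$, to $\sum_{\beta'\neq\beta}(\beta-\beta')^{-k}$.
\end{enumerate}

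Without these, none of the three cases can be closed.
\begin{itemize}
\item For $a=b$, the paper uses the vanishing power sums and Newton's identities to show that $B$ is a coset of the group of $a$-th roots of unity. Then $a$ divides $|AB|=|G|=a^2-a+1$, which is impossible.
\item For $b>a>2$, it globalizes X and Y into $\frac{a+1}{b-1}QP''-2Q'P'=-b(a-1)x^{a-2}P$ and $P\mid\mathcal R$, with $Q(x)=\prod_{c\in J}(x+c)$. It then gets a contradiction from a degree and leading-coefficient argument, plus a separate computation for $p=41$, $(a,b)=(3,4)$.
\item For $a=2$, $P\mid\mathcal R$ with $\deg\mathcal R\le 2<b$ forces $\mathcal R\equiv 0$. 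Hence $p\mid 8b-2$, which forces $[\F_p^*:G]=2$. Only then does a character-sum argument with Weil's bound for a cubic sum give $p\le 13$, hence $p=11$.
\end{itemize}

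Your closing plan to ``check small cases by hand'' presupposes exactly such bounds. With $a=2$ and $|G|=2b-1$, nothing in your sketch stops $G$ from having index larger than $2$ or $p$ from being arbitrarily large. Nothing in it excludes $a\ge 3$ at all.
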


The exceptional case in \cref{thm:delta-zero} does occur as mentioned in \cite{KYY26} when $G=\mathcal R_{11}=\{1,3,4,5,9\}$, $\lambda=2$, $A=\{1,7\}$, and $B=\{1,2,3\}$.

We briefly describe the proof of \cref{thm:delta-zero} as follows. 
Suppose for contradiction that $A,B\subseteq\F_p^*$ satisfy $|A|\geq 2$ and $|B|\geq 2$, and $AB=G-\lambda$ with $|A||B|=|G|+\min\{|A|,|B|\}-1$.
Write $n=|A|$ and $m=|B|$. We may assume that $n\le m$.
The argument is divided into three ranges, each requiring a different method. The first and third ranges lead to contradictions, whereas the second isolates the unique exceptional case $p=11$.

\begin{enumerate}
    \item If $m = n$, we use Newton's identities and properties of symmetric polynomials.
    \item If $n = 2 < m$, we use character sums and Weil's bound.
    \item If $m>n>2$, we use differential identities arising from analogues of Kalmynin's Relations X and Y.
\end{enumerate}

\subsection*{Notation}
Let $g(x)$ be a polynomial or a formal power series over $\F_p$. For an integer $i\geq 0$, we write $ [x^i]g(x) $ for the coefficient of $x^i$ in $g(x)$. We also write $ g(x)=O(x^n) $ if $x^n$ divides $g(x)$. 
\subsection*{Organization of the paper} 
In \cref{sec:2}, we review the multiplicative analogue of Hanson and Petridis polynomials, and recall symmetric polynomials.
In \cref{sec:3}, we establish analogues of Kalmynin's Relations X and Y, and derive useful lemmas from them. 
Finally, in \cref{sec:4}, we prove \cref{thm:delta-zero}.

\section{Preliminaries}\label{sec:2}

\subsection{A multiplicative analogue of Hanson--Petridis polynomials}

In this section, we review a multiplicative analogue of Hanson--Petridis polynomials developed by Kim, Yip, and Yoo \cite{KYY}, and derive useful facts.

Let $G$ be a proper multiplicative subgroup of $\F_p$ and $\lambda \in \F_p^* \setminus G$.
Let $A,B \subseteq \F_p^*$ with $|A|, |B| \geq 2$. 
Denote $A=\{a_1,a_2,\ldots, a_n\}$ with $|A|=n$ and $B=\{b_1,b_2,\ldots,b_m\}$ with $|B|=m$. 
Let $r:=|B\cap(-\lambda A^{-1})|$. We relabel the elements in $B$ so that
$b_1,\dots,b_r\in B\cap(-\lambda A^{-1})$ and $b_{r+1},\dots,b_m\in B\setminus(-\lambda A^{-1})$.

Recall from \cite[Section 4.1]{KYY} that there exists a unique solution $c_1,c_2,\ldots,c_n \in \F_p$ of the following system of equations:
\begin{equation} \label{system} 
\left\{
\TABbinary\tabbedCenterstack[l]{
\sum_{i=1}^n c_i=1\\\\
\sum_{i=1}^n c_i a_i^j=0,  \quad 1 \leq j \leq n-1
} .\right.    
\end{equation}

Moreover, one can obtain an explicit expression of the coefficients $c_1,c_2,\dots,c_n$ (see \cite[Lemma 2.1]{KYY26}):
\[
c_i=\frac{(-1)^{n-1}a_1a_2 \cdots a_n}{a_i \cdot \prod_{j\neq i}(a_i-a_j)}.   
\]
Using this expression, Kim, Yip, and Yoo obtained the following lemma. 

\begin{lemma}[{\cite[Lemma 2.1]{KYY26}}]\label{lem:GF}
The coefficients $c_1,\dots,c_n$ satisfy
\[
\sum_{i=1}^n \frac{c_i a_i^n}{1-a_ix}
= (-1)^{n-1}\Big(\prod_{i=1}^n a_i\Big)\cdot\frac{1}{\prod_{i=1}^n(1-a_ix)}.
\]
\end{lemma}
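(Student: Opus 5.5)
We prove the identity by comparing partial fractions, using the explicit formula for the $c_i$ recalled just above. We may assume the $a_i$ are distinct and nonzero, since $A\subseteq\F_p^*$. Then the right-hand side
\[
R(x)=(-1)^{n-1}\Big(\prod_{k=1}^n a_k\Big)\frac{1}{\prod_{k=1}^n(1-a_kx)}
\]
is a rational function whose denominator has distinct simple roots $x=a_i^{-1}$. Its numerator has degree $0<n$. Hence $R$ has a unique partial fraction decomposition
\[
R(x)=\sum_{i=1}^n \frac{\alpha_i}{1-a_ix}.
\]
It therefore suffices to show that $\alpha_i=c_ia_i^n$ for each $i$.

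To compute $\alpha_i$, multiply by $1-a_ix$ and evaluate at $x=a_i^{-1}$:
\[
\alpha_i=(-1)^{n-1}\Big(\prod_k a_k\Big)\prod_{j\ne i}\frac{1}{1-a_j/a_i}
=(-1)^{n-1}\Big(\prod_k a_k\Big)\frac{a_i^{n-1}}{\prod_{j\ne i}(a_i-a_j)}.
\]
From the explicit formula,
\[
c_ia_i^n=\frac{(-1)^{n-1}a_1\cdots a_n}{a_i\prod_{j\ne i}(a_i-a_j)}\,a_i^n,
\]
and the factors $a_i^{-1}\cdot a_i^{n}=a_i^{n-1}$ show this equals $\alpha_i$. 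This proves the lemma as an identity of rational functions, and hence also of formal power series in $\F_p[[x]]$.

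If we want to avoid relying on the explicit formula, we can instead derive it from the system \eqref{system}. By Cramer's rule with a Vandermonde determinant, or equivalently by Lagrange interpolation, the $c_i$ are determined by the condition
\[
\sum_i c_i f(a_i)=f(0)
\]
for every polynomial $f$ of degree at most $n-1$. Choosing
\[
f(t)=\prod_{j\ne i}\frac{t-a_j}{a_i-a_j}\cdot\frac{t}{a_i}\cdots
\]
does not work directly, because it vanishes at $0$. A cleaner route is to write $c_i a_i^{\,0}$ as a weighted evaluation and note that the linear functional $f\mapsto f(0)$, expressed in the Lagrange basis at the nodes $a_j$, gives
\[
c_i=\prod_{j\ne i}\frac{-a_j}{a_i-a_j},
\]
which agrees with the stated formula. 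The only potential obstacle is this bookkeeping of signs and powers of $a_i$. Since the formula is already given in the paper, the partial fraction comparison above is the whole proof.
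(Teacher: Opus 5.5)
Your proof is correct and takes essentially the route the paper indicates, since the paper says the lemma is obtained from the explicit formula for $c_i$: the residue computation $\alpha_i=(-1)^{n-1}\bigl(\prod_k a_k\bigr)a_i^{n-1}/\prod_{j\ne i}(a_i-a_j)=c_ia_i^n$ is exactly what is needed, and uniqueness of the partial fraction decomposition completes the argument. Your optional derivation of the formula via $c_i=\ell_i(0)$, with $\ell_i(t)=\prod_{j\ne i}(t-a_j)/(a_i-a_j)$ the Lagrange basis polynomial, is also correct, so the false start with the extra factor $t/a_i$ can simply be deleted.
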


Next, following \cite[Section 4.1]{KYY}, consider the following auxiliary polynomial
\[
f(x)=-\lambda^{n-1}+\sum_{i=1}^n c_i (a_ix+\lambda)^{n-1+|G|}\in \F_p[x].    
\]
Kim, Yip, and Yoo showed that $f$ is a non-zero polynomial, each of $b_1,\dots,b_r$ is a root of $f$ with multiplicity at least $n-1$,
and each of $b_{r+1},\dots,b_m$ is a root of $f$ with multiplicity at least $n$.
It follows that
\[
r(n-1)+(m-r)n=mn-r \le \deg f \le n-1+|G|.
\]

Now, suppose that there exist subsets $A,B\subseteq\F_p^*$ with $|A|=n\geq 2 $ and $|B|=m\geq 2$ such that
\[
AB=(G-\lambda)\setminus\{0\}=G-\lambda.
\]
The second equality holds since $\lambda\in \F_p^* \setminus G$, and so $0\notin G-\lambda$.

We first observe that $B\cap(-\lambda A^{-1})=\varnothing$, so $r=0$.
Indeed, if $b\in B\cap(-\lambda A^{-1})$, then $b=-\lambda a^{-1}$ for some $a\in A$, and thus $ab=-\lambda\in AB$.
However, $-\lambda\notin G-\lambda$, since an equality
$g-\lambda=-\lambda$ with $g\in G$ would imply $g=0$, contradicting
$G\subseteq\F_p^*$.

By symmetry, we also have $A \cap (-\lambda B^{-1})=\varnothing$, and we may assume that $n \le m$.
Since $|AB|=|G|$, we have $|G|\leq nm$.
By inequality \cref{eq:ineq1} and the fact that $r=0$, we have
\[
|G| \leq nm \leq |G|+\min\{n,m\}-1=|G|+n-1.
\]

Put
\[
P(x):=\prod_{b\in B}(x-b).
\]
Since every $b\in B$ is a root of $f$ with multiplicity at least $n$, we have $P(x)^n\mid f(x)$, and thus there exists a polynomial $R(x)\in\F_p[x]$ such that
\begin{equation}\label{eq:factor}
f(x)=P(x)^nR(x).
\end{equation}

We now specialize to the case $|G|+n-1=nm$.
Then $\deg R=0$, so equation \eqref{eq:factor} becomes
\begin{equation}\label{eq:delta0-factor}
f(x)=C P(x)^n
\end{equation}
for some $C\in\F_p^*$.
Note that, since $\lambda\in \F_p^* \setminus G$, we have $\lambda^{|G|}\neq 1$.
Therefore,
\[
f(0) =-\lambda^{n-1}+\lambda^{|G|+n-1}\sum_{i=1}^n c_i
=-\lambda^{n-1}+\lambda^{|G|+n-1}
=\lambda^{n-1}\bigl(\lambda^{|G|}-1\bigr) \neq 0.
\]
By system of equations \cref{system}, the coefficients of $x^j$ in $f(x)$ vanish for $1\leq j\leq n-1$.
Equivalently, $f(x)=f(0)+O(x^n)$.
This, together with $f(0)\neq 0$, gives, as an identity of formal power series at $x=0$,
\[
\frac{f'(x)}{f(x)}=O(x^{n-1}).
\]
On the other hand, differentiating equation \eqref{eq:delta0-factor} gives
\[
\frac{f'(x)}{f(x)} = n\frac{P'(x)}{P(x)} = n\sum_{b\in B}\frac{1}{x-b}.
\]
Since $b\neq 0$ for every $b\in B$, we may expand
\[
\frac{1}{x-b} = -\sum_{r=0}^{\infty}b^{-(r+1)}x^r.
\]
Therefore,
\[
\frac{f'(x)}{f(x)} = -\sum_{r=0}^{\infty} \left(n\sum_{b\in B}b^{-(r+1)}\right)x^r.
\]
Since the coefficients of $x^0,\ldots,x^{n-2}$ in $f'/f$ vanish, and $n<p$, comparison of coefficients gives
\begin{equation}\label{eq:bgamma}
\sum_{b\in B}b^{-t}=0
\qquad\text{for }1\leq t\leq n-1.
\end{equation}

Furthermore,
\[
\frac{P'(x)}{P(x)} = \frac{1}{n}\frac{f'(x)}{f(x)} = O(x^{n-1}).
\]
Since $P(0)\neq 0$, it follows that 
\begin{equation}\label{eq:P-derivative-gaps}
P'(x)=O(x^{n-1}).  
\end{equation}
As $n<p$, this implies
\begin{equation}\label{eq:delta0-P-gap-middle}
P(x)=P(0)+O(x^n).
\end{equation}

\subsection{Symmetric polynomials}\label{sec: 2.1}
In this section, we recall standard families of symmetric polynomials and fix our notation. 
Throughout, $n\ge 1$ is an integer and $x_1,\dots,x_n$ are indeterminates. 

\begin{itemize}

    \item The \textit{elementary symmetric polynomials} are defined by
    \[ e_k(x_1,x_2,\dots,x_n)
        = \sum_{1\le i_1< i_2< \cdots <i_k\le n}
          x_{i_1}x_{i_2}\cdots x_{i_k} \]
    for $1\le k \le n$. We set $e_0(x_1,x_2,\dots,x_n)=1$ and $e_k(x_1,x_2,\dots,x_n)=0$ for $k>n$.

    \item The \textit{complete homogeneous symmetric polynomials} are defined by 
    $$ h_{k}(x_{1},x_{2},\dots ,x_{n})=\sum _{1\leq i_{1}\leq i_{2}\leq \cdots \leq i_{k}\leq n}x_{i_{1}}x_{i_{2}}\cdots x_{i_{k}}=\sum_{\substack{i_1+i_2+\cdots+i_n=k\\i_j\ge0}}x_1^{i_1}x_2^{i_2}\cdots x_n^{i_n}$$
    for $k\geq 1$, and set $h_0(x_1,\ldots,x_n):=1$.

    \item The \textit{power sum symmetric polynomials} are defined by
    \[ p_k(x_1,x_2,\dots,x_n)
        = \sum_{i=1}^n x_i^k\]
    for $k\ge 0$.
\end{itemize}

The following well-known identity relates elementary symmetric polynomials to powe sum symmetric polynomials.
\begin{lemma}[Newton's identities \cite{Stanley}]\label{lem:newton}
For each integer $k\ge 1$, we have
\begin{equation}\label{eq: ehp2}
ke_{k}(x_{1},x_{2},\dots ,x_{n})=\sum_{i=1}^{k}(-1)^{i-1}e_{k-i}(x_{1},x_{2},\dots ,x_{n})p_{i}(x_{1},x_{2},\dots ,x_{n}),
\end{equation}
where the identity holds whenever $n\geq k \geq 1$. 
\end{lemma}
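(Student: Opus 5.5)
We will prove \cref{lem:newton} by a generating function argument carried out in the formal power series ring $\mathbb Z[x_1,\dots,x_n][[t]]$. Since both sides of \eqref{eq: ehp2} are polynomials with integer coefficients in $x_1,\dots,x_n$, an identity proved over $\mathbb Z$ specializes to any commutative ring, in particular to $\F_p$.

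Consider
\[
E(t)=\prod_{i=1}^n(1+x_it)=\sum_{k=0}^{n}e_k(x_1,\dots,x_n)\,t^k .
\]
The equality of the product with the sum is just the expansion of the product: choosing the term $x_it$ from exactly the factors indexed by a $k$-subset produces $e_k t^k$. Differentiating the product with respect to $t$ gives the logarithmic derivative
\[
E'(t)=E(t)\sum_{i=1}^n\frac{x_i}{1+x_it}.
\]
This is justified because each factor $1+x_it$ is a unit in the power series ring.

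Next expand each summand geometrically:
\[
\frac{x_i}{1+x_it}=\sum_{j\ge1}(-1)^{j-1}x_i^{j}t^{j-1}.
\]
Summing over $i$ yields
\[
\sum_{i=1}^n\frac{x_i}{1+x_it}=\sum_{j\ge1}(-1)^{j-1}p_j\,t^{j-1},
\]
and therefore
\[
\sum_{k\ge1}k\,e_k\,t^{k-1}=\Bigl(\sum_{l\ge0}e_l t^l\Bigr)\Bigl(\sum_{j\ge1}(-1)^{j-1}p_jt^{j-1}\Bigr).
\]
Comparing the coefficients of $t^{k-1}$ on both sides gives
\[
k\,e_k=\sum_{i=1}^{k}(-1)^{i-1}e_{k-i}\,p_i .
\]
This is exactly \eqref{eq: ehp2}. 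It holds for $1\le k\le n$, and in fact for all $k\ge1$ under the convention $e_k=0$ for $k>n$.

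There is no genuine obstacle here. The only point needing care is bookkeeping: the coefficient of $t^{k-1}$ in the product pairs $e_l$ with $p_j$ exactly when $l+j-1=k-1$, that is, $l=k-j$. The formal manipulations (differentiation and inversion of $1+x_it$) are legitimate over any commutative ring, so no division by integers is ever needed.
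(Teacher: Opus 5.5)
Your proof is correct: the logarithmic-derivative identity $E'(t)=E(t)\sum_{i=1}^n x_i/(1+x_it)$ in $\mathbb{Z}[x_1,\dots,x_n][[t]]$, followed by comparing coefficients of $t^{k-1}$, yields exactly the stated identity, and working over $\mathbb{Z}$ lets it specialize to $\F_p$. The paper gives no proof of its own and only cites Stanley, whose standard argument (the relation $E'(t)/E(t)=\sum_{i\ge1}(-1)^{i-1}p_i t^{i-1}$) is essentially the one you give.
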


\section{Analogues of Kalmynin's Relations $X$ and $Y$ for multiplicative subgroups}\label{sec:3}

In this section, we establish analogues of Kalmynin's Relations X and Y, and derive some useful lemmas. 
Throughout this section, let $G$ be a proper multiplicative subgroup of $\F_p$, and $\lambda\in \F_p^* \setminus G$.

Assume that there exist $A,B \subseteq \F_p^*$ such that $AB=G-\lambda$ with $|A|=n$ and $|B|=m$.
Denote $A=\{a_1,a_2,\ldots, a_n\}$ and $B=\{b_1,b_2,\ldots,b_m\}$ again.
We also assume $n \le m$ and $nm=|G|+n-1$, so $|G|=nm-n+1=n(m-1)+1$.
Fixing any $b\in B$, since multiplication by $b$ is injective and
$Ab\subseteq AB=G-\lambda$, we have $n=|A|\le |G|$.
Thus,
\[
N:=|G|+n-1\le 2|G|-1\le p-2,
\]
since $G$ is a proper subgroup of $\F_p^*$.
We use the notation
\[
(N)_k:=N(N-1)\cdots(N-k+1).
\]
Since $N\le p-2$, all the falling factorials used in this section are nonzero
in $\F_p$. Moreover, note that $m-1$, $n+1$, $n+2$, and $n(m-1)-1=|G|-2$ are all nonzero in $\F_p$.

Define
\[
J:=\left\{\frac{\lambda}{a}:a\in A\right\} \qquad  \text{and} \qquad Q(x):=\prod_{c\in J}(x+c).
\]
For $b\in B$, set
\[
L_1(b):=\sum_{\substack{b'\in B\\ b'\ne b}}\frac1{b-b'}, \quad L_2(b):=\sum_{\substack{b'\in B\\ b'\ne b}}\frac1{(b-b')^2}, \quad S_1(b):=\sum_{c\in J}\frac1{b+c}, \quad S_2(b):=\sum_{c\in J}\frac1{(b+c)^2}.
\]

\begin{lemma}[Relation $X$]\label{lem:relation-X}
For every $b\in B$,
\[
S_1(b)=\frac{n+1}{m-1}L_1(b).
\]
Equivalently,
\[
\sum_{c\in J}\frac1{b+c} = \frac{n+1}{m-1} \sum_{\substack{b'\in B\\ b'\ne b}}\frac1{b-b'}.
\]
\end{lemma}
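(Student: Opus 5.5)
The plan is to compute the two lowest nonvanishing Taylor coefficients of $f$ at a point $b\in B$ in two ways: from the factorization \eqref{eq:delta0-factor} $f=CP^n$, and from the explicit formula for $f$ combined with the fact that $a_ib+\lambda\in G$. Comparing the ratio of these two coefficients gives Relation $X$.

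\emph{Step 1: the factorization side.} Since the elements of $B$ are distinct, $P'(b)\neq 0$. Near $x=b$ we write
\[
P(x)=P'(b)(x-b)\Bigl(1+\tfrac{P''(b)}{2P'(b)}(x-b)+\cdots\Bigr),
\]
and the standard identity $\tfrac{P''(b)}{2P'(b)}=\sum_{b'\neq b}\tfrac{1}{b-b'}=L_1(b)$ holds. Raising to the $n$-th power gives
\[
\frac{f^{(n)}(b)}{n!}=CP'(b)^n\neq 0,\qquad \frac{f^{(n+1)}(b)}{(n+1)!}=CP'(b)^n\,nL_1(b).
\]
Hence
\[
\frac{f^{(n+1)}(b)}{f^{(n)}(b)}=n(n+1)L_1(b).
\]

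\emph{Step 2: the Stepanov side.} For every $k$ we have $f^{(k)}(x)=(N)_k\sum_i c_ia_i^k(a_ix+\lambda)^{N-k}$. Since $a_ib+\lambda\in G$, we have $(a_ib+\lambda)^{|G|}=1$, and $N-|G|=n-1$. Therefore
\[
f^{(n)}(b)=(N)_n\sum_i\frac{c_ia_i^n}{a_ib+\lambda},\qquad f^{(n+1)}(b)=(N)_{n+1}\sum_i\frac{c_ia_i^{n+1}}{(a_ib+\lambda)^2}.
\]
The falling factorials are nonzero because $N\le p-2$.

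Next apply \cref{lem:GF}. Its left side $\Phi(x)=\sum_i c_ia_i^n/(1-a_ix)$ has derivative $\sum_i c_ia_i^{n+1}/(1-a_ix)^2$. Its right side is a constant times $\prod_i(1-a_ix)^{-1}$, so logarithmic differentiation gives
\[
\frac{\Phi'(x)}{\Phi(x)}=\sum_j\frac{a_j}{1-a_jx}.
\]
Substitute $x=-b/\lambda$. This is legitimate because $\Phi(-b/\lambda)\neq0$ by the product formula, and $\lambda+a_jb\neq 0$ since $-\lambda\notin AB$. Then $1-a_jx=(\lambda+a_jb)/\lambda$, and
\[
\frac{a_j}{1-a_jx}=\frac{\lambda}{b+\lambda/a_j}=\frac{\lambda}{b+c},\qquad c\in J.
\]
So $\Phi'/\Phi$ at this point equals $\lambda S_1(b)$. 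Rewriting the sums in Step 2 in terms of $1-a_ix$ introduces the factors $\lambda^{-1}$ and $\lambda^{-2}$ respectively. Together with $(N)_{n+1}/(N)_n=N-n=|G|-1=n(m-1)$, this yields
\[
\frac{f^{(n+1)}(b)}{f^{(n)}(b)}=n(m-1)\lambda^{-1}\cdot\lambda S_1(b)=n(m-1)S_1(b).
\]

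\emph{Step 3: comparison.} Equating the two expressions gives $n(m-1)S_1(b)=n(n+1)L_1(b)$. Dividing by $n(m-1)$, which is nonzero in $\F_p$ as noted earlier, gives Relation $X$.

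The main obstacle is the bookkeeping in Step 2. One has to check that the exponent reduction lands exactly on the exponents $-1$ and $-2$, and track the powers of $\lambda$ and the ratio of falling factorials so that the constant comes out to $n(m-1)$. One also has to confirm that every quantity divided by is nonzero: $f^{(n)}(b)$, $\Phi(-b/\lambda)$, $n$, and $m-1$.
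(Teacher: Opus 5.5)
Your proposal is correct and follows essentially the same route as the paper. Both compare $f^{(n+1)}(b)/f^{(n)}(b)$ computed from $f=CP^n$ (the paper writes $f=(x-b)^n g_b$ with $g_b'(b)/g_b(b)=nL_1(b)$, which is equivalent to your Taylor expansion of $P$ at $b$) against the value obtained from the explicit form of $f$, the relation $(a_ib+\lambda)^{|G|}=1$, and logarithmic differentiation of \cref{lem:GF} at $x=-b/\lambda$, and your tracking of the powers of $\lambda$ and of $(N)_{n+1}/(N)_n=n(m-1)$ checks out.
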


\begin{proof}
Fix $b\in B$. From equation \eqref{eq:delta0-factor}, write $f(x)=(x-b)^n g_b(x)$, where
\[
\quad g_b(x)=C\prod_{\substack{b'\in B\\ b'\ne b}}(x-b')^n.
\]
Then
\begin{equation}\label{eq:relation-X-h-log}
\frac{g_b'(b)}{g_b(b)}=nL_1(b).
\end{equation}
On the other hand, $f^{(n)}(b)=n!g_b(b)$ and $f^{(n+1)}(b)=(n+1)!g_b'(b)$, so
\begin{equation}\label{eq:relation-X-der-ratio}
\frac{g_b'(b)}{g_b(b)}=\frac{f^{(n+1)}(b)}{(n+1)f^{(n)}(b)}.
\end{equation}
Let $N:=|G|+n-1$. By the definition of $f$, for $1\le k\le N$,
\[
f^{(k)}(x)=(N)_k\sum_{i=1}^n c_i a_i^k(a_i x+\lambda)^{N-k}.
\]
Since $a_i b+\lambda\in G$, we have $(a_i b+\lambda)^{|G|}=1$. Moreover, $N-n=|G|-1$. Thus
\[
f^{(n)}(b)=(N)_n\sum_{i=1}^n c_i\frac{a_i^n}{a_i b+\lambda} \qquad \text{and} \qquad f^{(n+1)}(b)=(N)_{n+1}\sum_{i=1}^n c_i\frac{a_i^{n+1}}{(a_i b+\lambda)^2}.
\]
Here, all denominators are nonzero since $a_ib+\lambda \in G \subset \F_p^*$ for every $1 \le i \le n$.
Substituting into \eqref{eq:relation-X-der-ratio} gives
\begin{equation}\label{eq:relation-X-ratio-before-GF}
\frac{g_b'(b)}{g_b(b)} = \frac{|G|-1}{n+1} \cdot \frac{\displaystyle\sum_{i=1}^n c_i\frac{a_i^{n+1}}{(a_i b+\lambda)^2}}
{\displaystyle\sum_{i=1}^n c_i\frac{a_i^n}{a_i b+\lambda}}.
\end{equation}
Put
\[
F(x):=\sum_{i=1}^n\frac{c_i a_i^n}{1-a_i x}.
\]
By \cref{lem:GF}, we have
\[
F(x)=(-1)^{n-1}\left(\prod_{i=1}^n a_i\right)\frac{1}{\prod_{i=1}^n(1-a_i x)}.
\]
Differentiating the first expression for $F$, we obtain
\[
F'(x)=\sum_{i=1}^n\frac{c_i a_i^{n+1}}{(1-a_i x)^2}.
\]
On the other hand, taking the derivative of the second
expression gives
\begin{equation}\label{eq:second}
\frac{F'(x)}{F(x)}=\sum_{i=1}^n\frac{a_i}{1-a_i x}.
\end{equation}
We now evaluate at $x=-b/\lambda$. Then
we have
\[
F\left(-\frac{b}{\lambda}\right)=\lambda \sum_{i=1}^n c_i\frac{a_i^n}{a_i b+\lambda} \qquad \text{and} \qquad F'\left(-\frac{b}{\lambda}\right) = \lambda^2 \sum_{i=1}^n c_i\frac{a_i^{n+1}}{(a_i b+\lambda)^2}.
\]
Therefore,
\[
\frac{\displaystyle\sum_{i=1}^n c_i\frac{a_i^{n+1}}{(a_i b+\lambda)^2}}
{\displaystyle\sum_{i=1}^n c_i\frac{a_i^n}{a_i b+\lambda}}
=\frac{1}{\lambda}\frac{F'(-b/\lambda)}{F(-b/\lambda)} 
= \sum_{i=1}^n \frac{a_i}{a_ib+\lambda}= \sum_{c\in J}\frac{1}{b+c} =  S_1(b).
\]
The second equality is from equation \cref{eq:second}.
Combining this with equations \eqref{eq:relation-X-h-log} and \eqref{eq:relation-X-ratio-before-GF}, we obtain
\[
nL_1(b)=\frac{|G|-1}{n+1}S_1(b).
\]
Since $|G|-1=n(m-1)$, this completes the proof.
\end{proof}

\begin{remark}\label{rem:X-logarithmic-form}
Recall that
\[
P(x)=\prod_{b\in B}(x-b) \qquad\text{and}\qquad Q(x)=\prod_{c\in J}(x+c).
\]
Relation $X$ can be written in the form as
\begin{equation}\label{eq:X-logarithmic-form}
\frac{Q'(b)}{Q(b)} = \frac{n+1}{m-1}\frac{P''(b)}{2P'(b)}
\qquad \text{for every }b\in B
\end{equation}
since
\[
\frac{Q'(b)}{Q(b)}=S_1(b) \qquad \text{and} \qquad \frac{P''(b)}{2P'(b)}=L_1(b).
\]
\end{remark}

\begin{lemma}[Relation $Y$]\label{lem:relation-Y}
For every $b\in B$,
\[
S_1(b)^2+S_2(b) = \frac{(n+1)(n+2)}{(m-1)(n(m-1)-1)}\left(nL_1(b)^2-L_2(b)\right).
\]
Equivalently,
\begin{align}
&\left(\sum_{c\in J}\frac1{b+c}\right)^2 + \sum_{c\in J}\frac1{(b+c)^2} \nonumber \\
&\qquad= \frac{(n+1)(n+2)}{(m-1)(n(m-1)-1)} \left( n\left(\sum_{\substack{b'\in B\\ b'\ne b}}\frac1{b-b'}\right)^2 - \sum_{\substack{b'\in B\\ b'\ne b}}\frac1{(b-b')^2}
\right). \nonumber
\end{align}
\end{lemma}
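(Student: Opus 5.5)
The plan is to follow the proof of \cref{lem:relation-X} one derivative further: compare $f^{(n+2)}(b)$ with $f^{(n)}(b)$ in two ways. One way uses the factorization \eqref{eq:delta0-factor}; the other uses the explicit form of $f$ together with \cref{lem:GF}.

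Fix $b\in B$ and write $f(x)=(x-b)^n g_b(x)$ with $g_b(x)=C\prod_{b'\ne b}(x-b')^n$, as before. By Leibniz' rule,
\[
f^{(n)}(b)=n!\,g_b(b) \qquad\text{and}\qquad f^{(n+2)}(b)=\frac{(n+2)!}{2}\,g_b''(b),
\]
so
\[
\frac{f^{(n+2)}(b)}{f^{(n)}(b)}=\frac{(n+1)(n+2)}{2}\cdot\frac{g_b''(b)}{g_b(b)}.
\]
Next use $g''/g=(g'/g)^2+(g'/g)'$. Here $g_b'/g_b=n\sum_{b'\ne b}(x-b')^{-1}$, and its derivative is $-n\sum_{b'\ne b}(x-b')^{-2}$. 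Hence
\[
\frac{g_b''(b)}{g_b(b)}=n^2L_1(b)^2-nL_2(b)=n\bigl(nL_1(b)^2-L_2(b)\bigr).
\]

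On the other side, use $f^{(k)}(x)=(N)_k\sum_i c_ia_i^k(a_ix+\lambda)^{N-k}$ and $(a_ib+\lambda)^{|G|}=1$ with $N-n=|G|-1$. This gives
\[
f^{(n+2)}(b)=(N)_{n+2}\sum_i \frac{c_ia_i^{n+2}}{(a_ib+\lambda)^3},
\]
so the ratio $f^{(n+2)}(b)/f^{(n)}(b)$ equals $(|G|-1)(|G|-2)$ times the quotient of $\sum_i c_ia_i^{n+2}(a_ib+\lambda)^{-3}$ by $\sum_i c_ia_i^{n}(a_ib+\lambda)^{-1}$.

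To evaluate this quotient, take $F(x)=\sum_i c_ia_i^n/(1-a_ix)$ as in the proof of Relation $X$. Then $F''(x)=2\sum_i c_ia_i^{n+2}/(1-a_ix)^3$. At $x=-b/\lambda$ we have $1-a_ix=(a_ib+\lambda)/\lambda$, so
\[
F''(-b/\lambda)=2\lambda^3\sum_i \frac{c_ia_i^{n+2}}{(a_ib+\lambda)^3}
\qquad\text{and}\qquad
F(-b/\lambda)=\lambda\sum_i\frac{c_ia_i^n}{a_ib+\lambda}.
\]
Therefore the quotient equals $F''(-b/\lambda)/\bigl(2\lambda^2F(-b/\lambda)\bigr)$.

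By \cref{lem:GF}, $F$ is a constant times $\prod_i(1-a_ix)^{-1}$. So $F'/F=\sum_i a_i/(1-a_ix)$, and $(F'/F)'=\sum_i a_i^2/(1-a_ix)^2$. At $x=-b/\lambda$ each term satisfies $a_i/(1-a_ix)=\lambda/(b+c)$ with $c=\lambda/a_i\in J$. Hence
\[
\frac{F''}{F}\Big|_{x=-b/\lambda}=\Bigl(\frac{F'}{F}\Bigr)^2+\Bigl(\frac{F'}{F}\Bigr)'=\lambda^2\bigl(S_1(b)^2+S_2(b)\bigr),
\]
and the quotient is $\tfrac12\bigl(S_1(b)^2+S_2(b)\bigr)$.

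Equating the two expressions for the ratio gives
\[
(n+1)(n+2)\,n\bigl(nL_1(b)^2-L_2(b)\bigr)=(|G|-1)(|G|-2)\bigl(S_1(b)^2+S_2(b)\bigr).
\]
Substitute $|G|-1=n(m-1)$ and $|G|-2=n(m-1)-1$, cancel the factor $n$, and divide by $(m-1)(n(m-1)-1)$. This yields exactly the stated identity.

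All divisions are legitimate. The falling factorials are nonzero because $N\le p-2$, and $m-1$, $n+2$ and $n(m-1)-1$ are nonzero in $\F_p$, as noted at the start of the section. Also $f^{(n)}(b)\neq0$, since $g_b(b)\ne0$. The only real care needed is in the bookkeeping: getting the Leibniz constant $(n+2)!/2$ and the factor $2$ in $F''$ right, and tracking the powers of $\lambda$. No new idea beyond Relation $X$ is required.
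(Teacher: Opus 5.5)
Your proof is correct and follows essentially the same route as the paper. Both arguments compute $f^{(n+2)}(b)/f^{(n)}(b)$ in two ways: once from the factorization $f=(x-b)^n g_b$ via Leibniz, and once from the explicit derivative formula combined with \cref{lem:GF} at $x=-b/\lambda$. The only cosmetic difference is that the paper writes $F''/(2F)$ as $h_2$ of the $z_i=a_i/(a_ib+\lambda)$, while you use $(F'/F)^2+(F'/F)'$ directly, which is the same computation.
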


\begin{proof}
Fix $b\in B$, and write again $f(x)=(x-b)^n g_b(x)$, where
\[
g_b(x)=C\prod_{\substack{b'\in B\\ b'\ne b}}(x-b')^n.
\]
Then
\begin{equation}\label{eq:relation-Y-h-second-log}
\frac{g_b'(b)}{g_b(b)}=nL_1(b) \qquad \text{and} \qquad  
\frac{g_b''(b)}{g_b(b)}=n\left(nL_1(b)^2-L_2(b)\right).
\end{equation}

Now, we find another expression of $g_b''(b)/g_b(b)$.
Since $f^{(n)}(b)=n!g_b(b)$ and $f^{(n+2)}(b)=\binom{n+2}{2}n!g_b''(b)$, we also have
\[
\frac{g_b''(b)}{g_b(b)}=\frac{2f^{(n+2)}(b)}{(n+1)(n+2)f^{(n)}(b)}.
\]
As in the proof of \cref{lem:relation-X}, with $N=|G|+n-1$,
\[
f^{(n)}(b)=(N)_n\sum_{i=1}^n c_i\frac{a_i^n}{a_i b+\lambda}
\qquad \text{and} \qquad f^{(n+2)}(b)=(N)_{n+2}\sum_{i=1}^n c_i\frac{a_i^{n+2}}{(a_i b+\lambda)^3}.
\]
Thus
\begin{equation}\label{eq:relation-Y-ratio-before-h2}
\frac{g_b''(b)}{g_b(b)} = \frac{2(|G|-1)(|G|-2)}{(n+1)(n+2)}
\cdot \frac{\displaystyle\sum_{i=1}^n c_i\frac{a_i^{n+2}}{(a_i b+\lambda)^3}} {\displaystyle\sum_{i=1}^n c_i\frac{a_i^n}{a_i b+\lambda}}.
\end{equation}

Next, we simplify the term on the right-hand side of equation \cref{eq:relation-Y-ratio-before-h2}.
Put
\[
F(x):=\sum_{i=1}^n\frac{c_i a_i^n}{1-a_i x}.
\]
Differentiating equation \cref{eq:second}, we obtain
\[
\frac{F''(x)}{F(x)}-\left(\frac{F'(x)}{F(x)}\right)^2=\sum_{i=1}^n\frac{a_i^2}{(1-a_i x)^2}.
\]
Thus, this, together with equation \cref{eq:second}, gives
\[
\frac{F''(x)}{2F(x)}=\frac12\left[\left(\sum_{i=1}^n\frac{a_i}{1-a_i x}\right)^2+\sum_{i=1}^n\frac{a_i^2}{(1-a_i x)^2}\right]=h_2\left(\frac{a_1}{1-a_1x},\ldots,\frac{a_n}{1-a_nx}\right).
\]
Now put $z_i:=a_i/(a_i b+\lambda)$.
Evaluating at $x=-b/\lambda$, we have
\[
\frac{a_i}{1+a_i b/\lambda}
=
\lambda z_i.
\]
Since $h_2$ is homogeneous of degree $2$, it follows that
\begin{equation}\label{eq:F-second-ratio}
\frac{F''(-b/\lambda)}{2F(-b/\lambda)}
=
\lambda^2 h_2(z_1,\ldots,z_n).
\end{equation}
On the other hand,
\[
F\left(-\frac b\lambda\right)=\lambda\sum_{i=1}^n c_i\frac{a_i^n}{a_i b+\lambda} \qquad \text{and} \qquad F''\left(-\frac b\lambda\right)
= 2\lambda^3\sum_{i=1}^n c_i\frac{a_i^{n+2}}{(a_i b+\lambda)^3}.
\]
Substituting these identities into equation \eqref{eq:F-second-ratio}, we obtain
\begin{equation}\label{eq:relation-Y-h2-ratio}
\frac{\displaystyle\sum_{i=1}^n c_i\frac{a_i^{n+2}}{(a_i b+\lambda)^3}}
{\displaystyle\sum_{i=1}^n c_i\frac{a_i^n}{a_i b+\lambda}} = h_2(z_1,\ldots,z_n).
\end{equation}
Since
\[
h_2(z_1,\ldots,z_n) =\frac12\left(\left(\sum_{i=1}^n z_i\right)^2+\sum_{i=1}^n z_i^2\right),
\]
we get
\[
h_2(z_1,\ldots,z_n)=\frac12\left(S_1(b)^2+S_2(b)\right).
\]
Combining this with equations \eqref{eq:relation-Y-h-second-log}, \eqref{eq:relation-Y-ratio-before-h2}, and \eqref{eq:relation-Y-h2-ratio}, we obtain
\[
n\left(nL_1(b)^2-L_2(b)\right) = \frac{(|G|-1)(|G|-2)}{(n+1)(n+2)} \left(S_1(b)^2+S_2(b)\right).
\]
Finally, using $|G|-1=n(m-1)$ and $|G|-2=n(m-1)-1$, the lemma follows.
\end{proof}

Using Relation X, we have the following lemma.

\begin{lemma}\label{lem:X-global}
We have
\[
\frac{n+1}{m-1} Q(x)P''(x)-2Q'(x)P'(x) =-m(n-1)x^{n-2}P(x).
\]
\end{lemma}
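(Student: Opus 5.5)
The plan is to show that the left-hand side vanishes at every $b\in B$, factor out $P(x)$, and then pin down the remaining cofactor using the vanishing of low-order coefficients together with a leading-coefficient comparison. Put
\[
H(x):=\frac{n+1}{m-1}Q(x)P''(x)-2Q'(x)P'(x).
\]

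\emph{Step 1: $P$ divides $H$.} Fix $b\in B$.
\begin{itemize}
\item Since the elements of $B$ are distinct, $b$ is a simple root of $P$, so $P'(b)\neq 0$.
\item For $c=\lambda/a\in J$ we have $b+c=(ab+\lambda)/a$. Since $ab+\lambda\in G\subseteq\F_p^*$, this is nonzero, so $Q(b)\neq 0$.
\end{itemize}
We may therefore clear denominators in the logarithmic form \eqref{eq:X-logarithmic-form} of Relation $X$ (\cref{lem:relation-X}, \cref{rem:X-logarithmic-form}). Multiplying by $2Q(b)P'(b)$ gives $H(b)=0$. As $P$ is the product of the distinct linear factors $x-b$, $b\in B$, we get $H(x)=P(x)T(x)$ for some $T\in\F_p[x]$.

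\emph{Step 2: the degree of $T$.} We have $\deg Q=n$ and $\deg P=m$, both monic. Hence $\deg H\le n+m-2$, and so $\deg T\le n-2$.

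\emph{Step 3: $T$ is a monomial.} By \eqref{eq:P-derivative-gaps}, $P'(x)=O(x^{n-1})$, and therefore $P''(x)=O(x^{n-2})$. It follows that both $Q P''$ and $Q'P'$ are $O(x^{n-2})$, so $H(x)=O(x^{n-2})$. Since $P(0)\neq 0$ (as $0\notin B$), $P$ is invertible as a power series at $0$. Hence $T=H/P=O(x^{n-2})$. Combined with $\deg T\le n-2$, this forces $T(x)=t\,x^{n-2}$ for some $t\in\F_p$.

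\emph{Step 4: compute $t$.} We compare the coefficients of $x^{n+m-2}$.
\begin{itemize}
\item In $QP''$ the leading coefficient is $m(m-1)$.
\item In $Q'P'$ it is $nm$.
\end{itemize}
So the $x^{n+m-2}$ coefficient of $H$ is
\[
\frac{n+1}{m-1}\,m(m-1)-2nm=m(n+1)-2nm=-m(n-1).
\]
Since $P$ is monic of degree $m$, this coefficient equals $t$. This gives $H(x)=-m(n-1)x^{n-2}P(x)$, as claimed.

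\emph{Main point of care.} The only step needing attention is Step 1: the rational identity from Relation $X$ must be converted into polynomial vanishing. This requires $P'(b)\neq0$ and $Q(b)\neq0$, both of which are checked above. Division by $m-1$ in the definition of $H$ is legitimate since $m-1\neq 0$ in $\F_p$. The remaining steps are routine degree and valuation bookkeeping.
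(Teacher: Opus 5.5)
Your proof is correct and follows the paper's argument step for step. You get $P \mid H$ from the logarithmic form of Relation $X$, then bound $\deg(H/P)\le n-2$. Next, $H=O(x^{n-2})$ from \eqref{eq:P-derivative-gaps} together with $P(0)\neq0$ forces $H/P=t\,x^{n-2}$, and comparing the $x^{n+m-2}$ coefficients gives $t=-m(n-1)$. Your explicit checks that $P'(b)\neq0$ and $Q(b)\neq0$, and that $P''=O(x^{n-2})$ follows from $P'=O(x^{n-1})$, fill in details the paper leaves implicit.
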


\begin{proof}
By equation \cref{eq:X-logarithmic-form}, we have
\[
P(x)\mid \frac{n+1}{m-1} Q(x)P''(x)-2Q'(x)P'(x).
\]
Let $\mathcal X(x):=\frac{n+1}{m-1} Q(x)P''(x)-2Q'(x)P'(x)$.
Since $\deg Q=n$ and $\deg P=m$, we have $\deg \mathcal X\le n+m-2$,
so $\deg(\mathcal X/P)\le n-2$.
On the other hand, equation \cref{eq:P-derivative-gaps} gives $\mathcal X(x)=O(x^{n-2})$.
Since $P(0)\ne0$, it follows that $\mathcal X(x)/P(x)=O(x^{n-2})$.
Thus $\mathcal X/P$ is a scalar multiple of $x^{n-2}$. 

Since $Q$ and $P$ are monic of degrees $n$ and $m$, respectively,
we have 
\[Q(x)P''(x)=m(m-1)x^{n+m-2}+\cdots \qquad  \text{and} \qquad  Q'(x)P'(x)=mnx^{n+m-2}+\cdots.\]
Thus $[x^{n+m-2}]\mathcal X =-m(n-1)$.
Comparing leading coefficients gives
\[
\frac{\mathcal X(x)}{P(x)} =-m(n-1)x^{n-2}.
\]
This proves the lemma.
\end{proof}

\begin{lemma}\label{lem:Y-divisibility}
Define
\begin{align*}
\mathcal R(x) :=&\ (n-1)(3mn+2m-n)Q'(x)^2 -(n+1)(3mn-n+1)Q(x)Q''(x) \\ 
&+m(n-1)(n+1)(n+2)x^{n-2}Q(x).  
\end{align*}
Then $P(x)\mid \mathcal R(x)$.
\end{lemma}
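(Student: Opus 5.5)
The plan is to show that $\mathcal R(b)=0$ for every $b\in B$. Since the elements of $B$ are distinct, this gives $P(x)\mid\mathcal R(x)$. Fix $b\in B$ and write $\alpha:=\frac{n+1}{m-1}$. Note that $Q(b)=\prod_{c\in J}(b+c)\neq 0$, because $b+\lambda/a=(ab+\lambda)/a\in G/a$. Also $P'(b)\neq 0$, since $P$ has simple roots.

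At $x=b$ I will use the following dictionary:
\[
\frac{Q'(b)}{Q(b)}=S_1(b),\qquad \frac{Q''(b)}{Q(b)}=S_1(b)^2-S_2(b),
\]
\[
\frac{P''(b)}{P'(b)}=2L_1(b),\qquad \frac{P'''(b)}{P'(b)}=3\bigl(L_1(b)^2-L_2(b)\bigr).
\]
The last two follow by expanding $P(x)=(x-b)\prod_{b'\neq b}(x-b')$, in the same way as \eqref{eq:relation-Y-h-second-log}.

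Relation $X$ (\cref{lem:relation-X}) gives $S_1=\alpha L_1$, so $L_1=S_1/\alpha$. Relation $Y$ (\cref{lem:relation-Y}) gives $S_1^2+S_2=\kappa(nL_1^2-L_2)$, where $\kappa=\frac{(n+1)(n+2)}{(m-1)(n(m-1)-1)}$. Combining these,
\[
\frac{Q''(b)}{Q(b)}=2S_1^2-\kappa\bigl(nL_1^2-L_2\bigr).
\]
The obstruction is the term $L_2$, which has no counterpart in $\mathcal R$. I will eliminate it by differentiating the identity of \cref{lem:X-global} once and evaluating at $x=b$. Since $P(b)=0$, this yields
\[
\alpha\bigl(Q'(b)P''(b)+Q(b)P'''(b)\bigr)-2\bigl(Q''(b)P'(b)+Q'(b)P''(b)\bigr)=-m(n-1)b^{n-2}P'(b).
\]
Dividing by $Q(b)P'(b)$ and using the dictionary gives
\[
\alpha\bigl(2S_1L_1+3(L_1^2-L_2)\bigr)-2\frac{Q''(b)}{Q(b)}-4S_1L_1=-m(n-1)\frac{b^{n-2}}{Q(b)}.
\]
This is a second linear relation among $L_2$, $Q''/Q$, $L_1^2$ and $b^{n-2}/Q(b)$.

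Solving the two relations for $L_2$ and $Q''(b)/Q(b)$, and substituting $L_1=S_1/\alpha=Q'(b)/(\alpha Q(b))$, gives an identity of the form
\[
u\,Q'(b)^2+v\,Q(b)Q''(b)+w\,b^{n-2}Q(b)=0
\]
with constants $u,v,w$ depending only on $n,m$. After clearing the denominators $m-1$, $n+1$ and $n(m-1)-1$ (all nonzero in $\F_p$, as noted at the start of this section), the constants should become proportional to $(n-1)(3mn+2m-n)$, $-(n+1)(3mn-n+1)$ and $m(n-1)(n+1)(n+2)$. That is exactly $\mathcal R(b)=0$.

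The main obstacle is this last step: the elimination and the bookkeeping of constants. I need to check that the coefficient of $Q''$ produced by the elimination is compatible with the normalization in $\mathcal R$, and that no division by an element vanishing in $\F_p$ occurs. For the latter it suffices that $n$, $n+1$, $n+2$, $m-1$ and $n(m-1)-1$ are nonzero mod $p$, which holds since all of them are at most $N\le p-2$.
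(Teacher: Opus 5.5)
Your proposal is correct and takes the same route as the paper: show $\mathcal R(b)=0$ for each $b\in B$ by combining Relations X and Y with the derivative of \cref{lem:X-global} at $x=b$, eliminating $L_2$ (equivalently $P'''(b)/P'(b)$), and clearing denominators. The bookkeeping you flagged does work out exactly. Since $\alpha/\beta=\frac{n(m-1)-1}{n+2}$, where $\beta$ is the Relation Y constant (your $\kappa$), the coefficients of $Q'(b)^2/Q(b)^2$ and $Q''(b)/Q(b)$ become $\frac{(n-1)(3mn+2m-n)}{(n+1)(n+2)}$ and $-\frac{3mn-n+1}{n+2}$, so multiplying by $(n+1)(n+2)Q(b)^2$ gives precisely $\mathcal R(b)=0$.
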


\begin{proof}
It is enough to show that $\mathcal R(b)=0$ for every $b\in B$.
Fix $b\in B$. Since $P$ has simple roots, $P'(b)\ne0$. Also $Q(b)\ne0$, since if $Q(b)=0$, then $b=-\lambda/a$ for some $a\in A$, and so $ab+\lambda=0$, a contradiction.

We find constants $\rho,\sigma,\tau$ such that
\[
\rho Q'(b)^2+\sigma Q(b)Q''(b)+\tau b^{n-2}Q(b)=0.
\]
After dividing by $Q(b)^2$, this is equivalent to finding a linear relation among $\eta^2$, $\xi$, and $b^{n-2}/Q(b)$,
where $\eta:=Q'(b)/Q(b)$ and $\xi:=Q''(b)/Q(b)$.

Put
\begin{equation}\label{eq:alphabeta}
\alpha:=\frac{n+1}{m-1} \qquad \text{and} \qquad \beta:=\frac{(n+1)(n+2)}{(m-1)(n(m-1)-1)}.
\end{equation}
\cref{lem:relation-X} and equation \cref{eq:X-logarithmic-form} give $\eta=S_1(b)=\alpha L_1(b)$, so $L_1(b)=\eta/\alpha$.
Moreover, by a derivative calculation from the definition of $Q$, we have $\left(Q'/Q\right)'(b)=-S_2(b)$. 
Since $\left(Q'/Q\right)'(b)=\xi-\eta^2$ due to chain rule, we have
\begin{equation}\label{eq:fromX2}
S_1(b)^2+S_2(b)=2\eta^2-\xi.
\end{equation}
Next, write $\omega:=P'''(b)/P'(b)$.
Since a derivative calculation from the definition of $P$ gives
\[
L_1(b)=\frac{P''(b)}{2P'(b)} \qquad  \text{and} \qquad L_2(b)=L_1(b)^2-\frac{P'''(b)}{3P'(b)},
\]
we have
\begin{equation}\label{eq:mid5}
L_2(b)=L_1(b)^2-\frac{\omega}{3}.
\end{equation}
Using equations \cref{eq:fromX2}, \cref{eq:mid5}, and \cref{lem:relation-Y} gives
\[
2\eta^2-\xi = S_1(b)^2+S_2(b) = \beta\left(nL_1(b)^2-L_2(b)\right) = \beta\left((n-1)L_1(b)^2+\frac{\omega}{3}\right).
\]
Thus, by $L_1(b)=\eta/\alpha$,
\begin{equation}\label{eq:mid4}
2\eta^2-\xi = \beta\left(\frac{n-1}{\alpha^2}\eta^2+\frac{\omega}{3}\right)
\quad \text{and so} \quad  \omega= \frac{3}{\beta}(2\eta^2-\xi) -\frac{3(n-1)}{\alpha^2}\eta^2.
\end{equation}

Note that we have $\alpha QP''-2Q'P'=-m(n-1)x^{n-2}P$ by \cref{lem:X-global}. Differentiating the equation, evaluating at $x=b$, and using $P(b)=0$, we get
\[
\alpha Q(b)P'''(b)+(\alpha-2)Q'(b)P''(b)-2Q''(b)P'(b) =-m(n-1)b^{n-2}P'(b).
\]
Dividing by $Q(b)P'(b)$, and using
\[
\frac{P''(b)}{P'(b)}=2L_1(b)=\frac{2\eta}{\alpha},
\]
we obtain
\begin{equation}\label{eq:mid2}
\alpha \omega +\frac{2(\alpha-2)}{\alpha}\eta^2 -2\xi + m(n-1)\frac{b^{n-2}}{Q(b)} =0.
\end{equation}
Substituting equation \cref{eq:mid4} into equation \cref{eq:mid2}, we have 
\begin{equation}\label{eq:mid}
\left( \frac{6\alpha}{\beta} -\frac{3(n-1)}{\alpha} +\frac{2(\alpha-2)}{\alpha} \right)\eta^2 + \left( -\frac{3\alpha}{\beta}-2 \right)\xi + m(n-1)\frac{b^{n-2}}{Q(b)} =0.
\end{equation}
Using equation \cref{eq:alphabeta}, a direct simplification gives
\[
\frac{6\alpha}{\beta} -\frac{3(n-1)}{\alpha} +\frac{2(\alpha-2)}{\alpha} = \frac{(n-1)(3mn+2m-n)}{(n+1)(n+2)} \qquad \text{and} \qquad 
-\frac{3\alpha}{\beta}-2 = -\frac{3mn-n+1}{n+2}.
\]
Therefore, multiplying equation \cref{eq:mid} by $(n+1)(n+2)Q(b)^2$, we obtain
\begin{align*}
(n-1)&(3mn+2m-n)Q'(b)^2 -(n+1)(3mn-n+1)Q(b)Q''(b)\\
&\qquad +m(n-1)(n+1)(n+2)b^{n-2}Q(b)=0.
\end{align*}
In other words, $\mathcal R(b)=0$.
\end{proof}

\section{Proof of \cref{thm:delta-zero}}\label{sec:4}

In this section, we prove \cref{thm:delta-zero}. 

\subsection{The case (1): $n=m$}

\begin{proposition}\label{prop:case1}
Let $p$ be an odd prime, let $G$ be a proper multiplicative subgroup of $\F_p$, and let $\lambda\in \F_p^* \setminus G$.
Let $A,B\subseteq\F_p^*$ with $|A|=|B|=n\geq 2$.
If $|G|+n-1=n^2$, then
\[
AB\neq G- \lambda.
\]
\end{proposition}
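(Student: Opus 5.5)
The plan is to show that both $A$ and $B$ must be cosets of the subgroup $\mu_n$ of $n$-th roots of unity in $\F_p^*$. Then $AB$ is a single coset of $\mu_n$, so it has only $n$ elements, which is far too few.

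First I would handle $B$. Assume for contradiction that $AB=G-\lambda$ with $|A|=|B|=n\ge 2$ and $|G|+n-1=n^2$. Then we are in the setting of \cref{sec:2} with $m=n$, so \eqref{eq:delta0-P-gap-middle} holds: $P(x)=P(0)+O(x^n)$. Since $P$ is monic of degree $n$, this forces
\[
P(x)=x^n+P(0),
\]
so every $b\in B$ satisfies $b^n=-P(0)$. Hence $B\subseteq \beta\mu_n$ for any fixed $\beta\in B$, where $\mu_n=\{x\in\F_p^*: x^n=1\}$. Because $x^n+P(0)$ has $n$ distinct roots (namely the elements of $B$), we get $n\mid p-1$, $|\mu_n|=n$, and $B=\beta\mu_n$.

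An alternative route is through \eqref{eq:bgamma}, i.e.\ $p_t(B^{-1})=0$ for $1\le t\le n-1$. By induction on $k$, Newton's identities (\cref{lem:newton}) give $k\,e_k(B^{-1})=0$ for $1\le k\le n-1$. Since $k<n\le |G|<p$, each $k$ is invertible in $\F_p$, so $e_k(B^{-1})=0$. This yields $\prod_{b\in B}(x-b^{-1})=x^n+(-1)^n e_n(B^{-1})$ and the same conclusion for $B$.

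Next I would swap the roles of $A$ and $B$. The hypotheses are symmetric when $n=m$: we have $AB=BA=G-\lambda$, $A\cap(-\lambda B^{-1})=\varnothing$, and $|G|+|B|-1=n^2=|A||B|$. So the entire construction of \cref{sec:2} can be run with the auxiliary polynomial built from $B$, i.e.\ coefficients $c_i$ solving \eqref{system} for the $b_i$, and its roots at the elements of $A$. This yields $\prod_{a\in A}(x-a)=x^n-\alpha^n$, hence $A=\alpha\mu_n$ for some $\alpha\in A$. I expect this to be the main point needing care. One must check that each step of \cref{sec:2}, in particular \eqref{eq:ineq1} and the factorization \eqref{eq:delta0-factor}, used only $|A|\le|B|$ together with the equality $|A||B|=|G|+|A|-1$, both of which remain valid after the swap because $m=n$.

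Finally, $AB=\alpha\beta\,\mu_n\mu_n=\alpha\beta\,\mu_n$ has exactly $n$ elements. But $|AB|=|G-\lambda|=|G|=n^2-n+1$, so $n=n^2-n+1$, that is $(n-1)^2=0$ and $n=1$. This contradicts $n\ge 2$, so $AB\neq G-\lambda$.
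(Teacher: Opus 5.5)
Your proof is correct, but it follows a somewhat different route from the paper's at two points.

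\emph{Showing $B$ is a coset.} Your primary route is shorter than the paper's. Since $m=n$, \eqref{eq:delta0-P-gap-middle} together with $P$ monic of degree $n$ gives $P(x)=x^n+P(0)$ at once. The paper instead goes through \eqref{eq:bgamma} and Newton's identities to get the same shape for $\prod_{b\in B}(x-b^{-1})$; your ``alternative route'' is exactly the paper's.

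\emph{The finish.} The paper never needs any information about $A$. Once $B=\beta H$ with $H=\mu_n$, the set $AB=\beta AH$ is a union of cosets of $H$. Hence $n$ divides $|AB|=|G|=n^2-n+1\equiv 1 \pmod n$, which is impossible for $n\ge 2$.

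Your swap step is valid. The construction of Section~2 uses only $AB+\lambda\subseteq G$, the emptiness of $A\cap(-\lambda B^{-1})$, and the equality $|A||B|=|G|+|B|-1$, which holds because $m=n$. The ordering $|A|\le|B|$ plays no role in the polynomial argument itself. The auxiliary polynomial built from $B$ is automatically nonzero, since its value at $0$ is $\lambda^{n-1}(\lambda^{|G|}-1)\neq 0$. So you do get $A=\alpha\mu_n$ and the cardinality contradiction $n=n^2-n+1$.

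However, this costs a full rerun of the Stepanov argument with the roles reversed, and the coset-divisibility observation makes it unnecessary. What your version buys is only the stronger structural fact that both factors would have to be cosets of $\mu_n$, which is not needed here.
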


\begin{proof}
Suppose, for contradiction, that $AB=G-\lambda$.
Write $B=\{b_1,\ldots,b_n\}$.
Since $|AB|=|G|$, fixing any $b\in B$ and considering the injective map $A\rightarrow AB$, $a\mapsto ab$, we obtain $n\le |G|$. Thus
\[
n^2=|G|+n-1\le 2|G|-1\le p-2,
\]
where the last inequality follows since $G$ is a proper subgroup of $\F_p^*$. In particular, $n<p$.

By equation \eqref{eq:bgamma},
\[
\sum_{j=1}^n b_j^{-t}=0 \qquad \text{for }1\le t\le n-1.
\]
Then, we have $p_t(b_1^{-1},\ldots,b_n^{-1})=0$ for $1\le t\le n-1$.
\cref{lem:newton} gives
\[
e_1(b_1^{-1},\ldots,b_n^{-1})=\cdots = e_{n-1}(b_1^{-1},\ldots,b_n^{-1})=0.
\]
Thus,
\[
\prod_{j=1}^n(x-b_j^{-1})=x^n+(-1)^n e_n(b_1^{-1},\ldots,b_n^{-1}).
\]
In particular, for every $j=1,\ldots,n$, we have $(b_j^{-1})^n = (-1)^{n+1}e_n(b_1^{-1},\ldots,b_n^{-1})$, so all the elements $b_j^{-1}$ have the same $n$-th power.
Fix $b_1^{-1}$, and put
\[
H:=\{h\in\F_p^*:h^n=1\}.
\]
For every $j$, we have $\left(b_j^{-1}/b_1^{-1}\right)^n=1$, and so $b_j^{-1}/b_1^{-1}\in H$. Since the elements $b_1^{-1},\ldots,b_n^{-1}$ are distinct, this gives at least $n$ distinct elements of $H$. On the other hand, the polynomial $x^n-1$ has at most $n$ roots, so $|H|=n$ and $B^{-1}=b_1^{-1}H$.
Since $H^{-1}=H$, it follows that $B=\beta H$ for some $\beta\in\F_p^*$.
Thus, $AB=\beta AH$.
The set $AH$ is a union of cosets of $H$, and distinct cosets of $H$ are disjoint. Therefore,
\[
n=|H|\mid |AH|=|AB|.
\]
However,
\[
|AB|=|G|=n^2-n+1\equiv1\pmod n,
\]
which gives a contradiction. This proves the proposition.
\end{proof}

\subsection{The case (2): $n=2<m$}

\begin{proposition}\label{prop:case2}
Let $p$ be an odd prime, let $G=\mathcal R_p$, and let $\lambda\in \F_p^* \setminus G$.
Suppose that $A,B\subseteq\F_p^*$ with $|A|=2$ and $|B|=m >2$ satisfy $AB=G- \lambda$.
Assume $|G|+1=2m$. Then $p=11$.
\end{proposition}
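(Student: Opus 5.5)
We normalize first. Since $|G|=(p-1)/2$ and $|G|+1=2m$, we get $p=4m-1$. Hence $p\equiv 3\pmod 4$ and $-1\notin\mathcal R_p$, which is what makes the quadratic character $\chi$ manageable here. Write $A=\{a_1,a_2\}$ and replace $(A,B)$ by $(a_1^{-1}A,\,a_1B)$, so that $A=\{1,u\}$ with $u\ne 1$. Then $G-\lambda=B\cup uB$, and $|B\cup uB|=2m-1$, so $|B\cap uB|=1$. Since $\lambda\notin G$ we have $\chi(\lambda)=-1$, and both $x\mapsto x+\lambda$ and $x\mapsto ux+\lambda$ send $B$ into $G$.

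The main step is an exact character-sum count. Put
\[
N(u):=\#\{x\in\F_p:\ x+\lambda\in G,\ ux+\lambda\in G\},
\]
and expand with $\tfrac14(1+\chi(x+\lambda))(1+\chi(ux+\lambda))$, correcting for the two points $x=-\lambda$ and $x=-\lambda/u$. The linear sums $\sum_x\chi(x+\lambda)$ vanish. The quadratic sum $\sum_x\chi\bigl((x+\lambda)(ux+\lambda)\bigr)$ equals exactly $-\chi(u)$, because the two roots are distinct when $u\ne 1$. So here Weil's bound is not needed in its inequality form; the sum is known precisely. This gives $N(u)=\tfrac{p-3}{4}+\varepsilon$ with an explicit $\varepsilon\in\{O(1)\}$ depending only on $\chi(u)$, $\chi(\lambda)=-1$ and $\chi(-1)=-1$.

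On the other side, $B$ is contained in the set being counted, so $N(u)\ge m=\tfrac{p+1}{4}$. I would also determine exactly which elements of $(G-\lambda)\setminus B$ can lie in this set, using $G-\lambda=B\cup uB$ and $|B\cap uB|=1$. This should force $N(u)$ to equal $m$ or $m+1$, and comparing with the exact value pins $\chi(u)$. The same count applied to $u^{-1}$ (equivalently, to the pair $(uB,u^{-1})$) gives a second identity. If I need a third relation, I would count triples, for instance $\#\{x:\ x,ux,u^2x\in G-\lambda\}$; this involves a genuine cubic character sum, and there I would use Weil's bound $|\sum\chi(\text{cubic})|\le 2\sqrt p$ together with the structural count to get an inequality of the form $p\le C$.

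To finish, I would combine this with the algebraic constraints already available for $n=2$. These are $\sum_{b\in B}b^{-1}=0$ from \eqref{eq:bgamma}, and \cref{lem:X-global}, which for $n=2$ reads
\[
\tfrac{3}{m-1}\,QP''-2Q'P'=-mP,\qquad Q(x)=(x+\lambda)(x+\lambda/u).
\]
This is a second-order hypergeometric-type ODE. Requiring a monic degree-$m$ polynomial solution with $P(0)\ne 0$ and vanishing $x^1$-coefficient, by the recursion on coefficients, should cut $u$ and $\lambda$ down to finitely many admissible configurations for small $p$. The main obstacle I expect is closing the gap between "$p$ is bounded", obtained from Weil, and "$p=11$ exactly". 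For the remaining small primes $p=4m-1$, I would check directly, via the ODE recursion or a short exhaustive verification, that only $p=11$ (with $m=3$, realized by the example $\lambda=2$, $A=\{1,7\}$, $B=\{1,2,3\}$) survives.
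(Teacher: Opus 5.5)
Your overall route is the paper's: an exact quadratic-character count for $T:=\{x:\ x+\lambda\in G,\ ux+\lambda\in G\}$, followed by a triple count whose cubic character sum is bounded by Weil. As written, though, the plan does not close. First, the correction term is not a function of $\chi(u)$ alone. At the excluded points $x=-\lambda$ and $x=-\lambda/u$ the summand equals $\frac14(1-\chi(1-u))$ and $\frac14(1-\chi(1-u^{-1}))$ (using $\chi(\lambda)=-1$), so
\[
N(u)=\frac{p-2-\chi(u)+\chi(1-u)+\chi(1-u^{-1})}{4}\le\frac{p+1}{4}=m .
\]
Since $B\subseteq T$ and $|B|=m$, this forces $B=T$ outright. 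There is no ``$m+1$'' case, and nothing to analyze in $(G-\lambda)\setminus B$. It also pins \emph{three} values: $\chi(u)=-1$ and $\chi(1-u)=\chi(1-u^{-1})=1$.

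These facts make your triple count exact. The set $\{x:\ u^jx+\lambda\in G,\ j=0,1,2\}=T\cap u^{-1}T=B\cap u^{-1}B$ has exactly one element, and the pinned values make all three boundary points contribute $0$. For $u^2\ne 1$ the pair sums total $-\chi(u)-\chi(u^2)-\chi(u^3)=1$. Hence $8=p+1+S$ with $S$ the cubic sum, and Weil gives $p-7\le 2\sqrt p$, i.e.\ $p\le 13$. Together with $p=4m-1$ and $m\ge 3$, this yields $p=11$. So the ``main obstacle'' you anticipate (going from bounded $p$ to $p=11$) does not arise, and you need neither the ODE paragraph, which as stated proves nothing, nor an exhaustive check.

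Second, you omit the degenerate case $u=-1$, which survives the pinning whenever $\chi(2)=1$. There your ``genuine cubic'' is $(x+\lambda)^2(\lambda-x)$, which is not squarefree. Moreover, the pair sum for $j=0,2$ equals $p-1$ rather than $-1$, so the Weil step as sketched breaks down. This case needs a separate, easy argument. For $u=-1$ the set $T$ is invariant under $x\mapsto -x$, so $B=T=-B=uB$. This gives $|B\cap uB|=m>2$, contradicting $|B\cap uB|=1$.
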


\begin{proof}
Since $|\mathcal R_p|=(p-1)/2$, we have $2m=|\mathcal R_p|+1=(p+1)/2$.
Thus $p=4m-1$.
In particular, $p\equiv3\pmod4$.

We may assume $A=\{1,r\}$ with $r\ne1$.
Since $\lambda\in \F_p^* \setminus \mathcal R_p$, multiplication by $\lambda^{-1}$ sends $\mathcal R_p$ to the set $\mathcal N_p$ of quadratic nonresidues. Thus $AB+\lambda=\mathcal R_p$ is equivalent to 
\begin{equation}\label{eq:nonresidues} (1+B/\lambda)\cup(1+r B/\lambda)=\mathcal N_p.
\end{equation}
Note that both $1+B/\lambda$ and $1+rB/\lambda$ have size $m$, while $|\mathcal N_p|=\frac{p-1}{2}=2m-1$.
Thus
\begin{equation}\label{eq:n2-T-intersection-one}
|(1+B/\lambda)\cap(1+rB/\lambda)|=1.
\end{equation}

Let $\chi$ denote the quadratic character of $\F_p$ with $\chi(0)=0$. Define
\[
T_r:=\{t\in\F_p:\chi(1+t)=\chi(1+rt)=-1\}.
\]
From equation \eqref{eq:nonresidues}, we have $B/\lambda\subseteq T_r$.

Now, we compute $|T_r|$. Note that
\[
\frac14\sum_{t\in\F_p}(1-\chi(1+t))(1-\chi(1+rt))=\frac{p-\chi(r)}4
\] 
using
\[
\sum_{t\in\F_p}\chi(1+t)=\sum_{t\in\F_p}\chi(1+rt)=0 \qquad \text{and} \qquad \sum_{t\in\F_p}\chi((1+t)(1+rt))=-\chi(r).
\]
Thus we have
\begin{equation}\label{eq:n2-Tr-count}
|T_r| = \frac{p-\chi(r)}{4} - \frac{1-\chi(1-r)}{4} - \frac{1-\chi(1-r^{-1})}{4}.
\end{equation}
Indeed, since $r\ne1$, we have that $-1$ and $-r^{-1}$ are distinct.
At the two points $t=-1$ and $t=-r^{-1}$, this main term counts possible zero values. Subtracting those two spurious contributions gives equation \eqref{eq:n2-Tr-count}.

Since $|B/\lambda|=m=(p+1)/4$ and since $B/\lambda\subseteq T_r$, equation \eqref{eq:n2-Tr-count} implies
\[
\frac{p+1}{4} \le \frac{p-\chi(r)-2+\chi(1-r)+\chi(1-r^{-1})}{4}.
\]
Equivalently, $3\le -\chi(r)+\chi(1-r)+\chi(1-r^{-1})$.
Since each term on the right-hand side is at most $1$, equality must hold throughout. Consequently,
\begin{equation}\label{eq:n2-r-character-conditions}
\chi(r)=-1, \qquad \chi(1-r)=1, \qquad \text{and} \qquad \chi(1-r^{-1})=1.
\end{equation}
Thus $|T_r| \le (p-2+1+1+1)/4=(p+1)/4=m$, so using $m=|B/\lambda| \le |T_r|$, we have $|T_r|=m$. Then $B/\lambda=T_r$.
Combining this with the intersection condition \eqref{eq:n2-T-intersection-one} gives $1=|(1+T_r)\cap(1+rT_r)|$.
Since $1+t=1+rt'$ if and only if $t=rt'$, we have $|(1+T_r)\cap(1+rT_r)|=|T_r\cap rT_r|$.
Thus, putting $I_r:=|T_r\cap rT_r|$, we obtain
\begin{equation}\label{eq:n2-Ir-equals-one}
I_r=1.
\end{equation}

Suppose first that $r^2\ne1$. 
Note that 
\[
I_r=\frac18\sum_{x\in\F_p}(1-\chi(1+x))(1-\chi(1+rx))(1-\chi(1+x/r)).
\]
Indeed, since at the exceptional points, the contribution is zero by $\chi(1-r)=\chi(1-r^{-1})=1$.

Expanding the product gives
\begin{align*}
8I_r =&p-\sum_{x\in\F_p}\chi(1+x) -\sum_{x\in\F_p}\chi(1+rx) - \sum_{x\in\F_p}\chi(1+x/r) +\sum_{x\in\F_p}\chi((1+x)(1+rx))  \\ &+\sum_{x\in\F_p}\chi((1+x)(1+x/r)) +\sum_{x\in\F_p}\chi((1+rx)(1+x/r)) -S_r,
\end{align*}
where
\[
S_r := \sum_{x\in\F_p} \chi\bigl((1+x)(1+rx)(1+x/r)\bigr).
\]
The three linear character sums vanish. Moreover, using the standard identity
\[
\sum_{x\in\F_p}\chi((x-u)(x-v))=-1
\]
for $u\ne v$, together with $r^2\ne1$, we obtain
\begin{align*}
&\sum_{x\in\F_p}\chi((1+x)(1+rx))=-\chi(r)=1, \qquad  \sum_{x\in\F_p}\chi((1+x)(1+x/r)) =-\chi(r^{-1})=1, \\ &
 \qquad \qquad \qquad \text{and} \qquad \sum_{x\in\F_p}\chi((1+rx)(1+x/r))=-1,
\end{align*}
where we used $\chi(r)=-1$. Thus, we obtain
\begin{equation}\label{eq:n2-Ir-character-sum}
I_r=\frac{p+1-S_r}{8}.
\end{equation}

Combining equations \eqref{eq:n2-Ir-equals-one} and \eqref{eq:n2-Ir-character-sum}, we get $S_r=p-7$.
The polynomial $(1+x)(1+rx)(1+x/r)$ is squarefree of degree $3$. Thus Weil's bound gives $|S_r|\le2\sqrt p$.
Together with $S_r=p-7$, this implies $p-7\le2\sqrt p$.
Thus $p\le13$. Since $p=4m-1$ and $m>2$, we have $p\ge11$. Also $p\equiv3\pmod4$, so the only possibility is $p=11$.

It remains to treat the case $r^2=1$. Since $r\ne1$, we have $r=-1$. Then equation \eqref{eq:n2-r-character-conditions} gives $\chi(2)=1$. In this case,
\[
I_{-1}=|\{x\in\F_p:\chi(1+x)=\chi(1-x)=-1\}|=\frac14\sum_{x\in\F_p}(1-\chi(1+x))(1-\chi(1-x)).
\]
Indeed, at the two points $x=1$ and $x=-1$, the possible contributions are zero due to $\chi(2)=1$.
Since
\[
\sum_{x\in\F_p}\chi(1+x)=\sum_{x\in\F_p}\chi(1-x)=0
\quad \text{and} \quad \sum_{x\in\F_p}\chi((1+x)(1-x))=-\chi(-1)=1,
\]
we have $I_{-1}=\frac{p+1}{4}=m$.
Since $m>2$, this contradicts \eqref{eq:n2-Ir-equals-one}. Therefore $r=-1$ is impossible. This completes the proof.
\end{proof}

\begin{corollary}\label{prop:case2-general}
Let $p$ be an odd prime, let $G$ be a proper multiplicative subgroup of $\F_p^*$, and let $\lambda\in\F_p^*\setminus G$.
Suppose that $A,B\subseteq\F_p^*$ satisfy $|A|=2<|B|=m$, $|G|+1=2m$, and
$AB=G-\lambda$.
Then
\[
[\F_p^*:G]=2.
\]
Therefore, $G=\mathcal R_p$ and $p=11$.
\end{corollary}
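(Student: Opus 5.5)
The plan is to show first that the index $d:=[\F_p^*:G]$ must equal $2$, and then to quote \cref{prop:case2}, which gives $p=11$ once $G=\mathcal R_p$. For the normalization, replace $(A,B)$ by $(a^{-1}A,aB)$ for some $a\in A$; this leaves $AB$ unchanged, so we may take $A=\{1,r\}$ with $r\neq1$. Then $AB=B\cup rB=G-\lambda$. Both $B$ and $rB$ have $m$ elements and $|G-\lambda|=2m-1$, so as in \eqref{eq:n2-T-intersection-one} we get $|B\cap rB|=1$. In particular
\[
B\subseteq T:=\{t\in\F_p:\ t+\lambda\in G,\ rt+\lambda\in G\},
\]
and $m=(|G|+1)/2=\bigl((p-1)/d+1\bigr)/2$.

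The main step bounds $|T|$ from above using multiplicative characters of order $d$, generalizing the computation of $|T_r|$ in \cref{prop:case2}. Write $\mathbf 1_G(y)=\frac1d\sum_{\psi^d=1}\psi(y)$ for $y\neq0$. Then
\[
|T|=\frac1{d^2}\sum_{\psi_1,\psi_2}\ \sum_{t}\psi_1(t+\lambda)\psi_2(rt+\lambda)+O(1),
\]
where the $O(1)$ accounts for the two points where an argument vanishes. The term $\psi_1=\psi_2=1$ contributes $p/d^2$. Terms with exactly one nontrivial character vanish. The remaining mixed terms are Jacobi-type sums with distinct roots $-\lambda$ and $-\lambda/r$ (distinct since $r\ne1$), so each has absolute value exactly $\sqrt p$. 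This yields
\[
\frac{p-1+d}{2d}=m\le |T|\le \frac{p}{d^2}+\Bigl(1-\frac1d\Bigr)^2\sqrt p+2 .
\]
For $d\ge3$ the left side exceeds $p/d^2$ by at least $p/(6d)$. Hence the inequality forces $p\ll d^2$, say $d\gtrsim\sqrt p/6$, unless $p$ is very small.

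The hard step will be the remaining range, where $d$ is large, $|G|=2m-1$ is comparatively small, and the Weil bound becomes useless. Here I would exploit the rigid polynomial identity available when $n=2$. By \eqref{eq:delta0-factor},
\[
f(x)=-\lambda+c_1(x+\lambda)^{2m}+c_2(rx+\lambda)^{2m}=C\,P(x)^2 .
\]
In addition, \cref{lem:X-global} with $n=2$ gives the second-order linear ODE
\[
\tfrac{3}{m-1}\,Q P''-2Q'P'=-mP,
\]
where $Q$ is an explicit quadratic with roots $-\lambda,-\lambda/r$. Expanding this ODE coefficientwise gives a two-term recursion for the coefficients of $P$, together with $P=P(0)+O(x^2)$ from \eqref{eq:delta0-P-gap-middle}. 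This should determine $P$ up to a scalar as a truncated hypergeometric polynomial in $x$. Substituting it into $f=CP^2$ and comparing, say, the coefficients of $x^2$ and $x^{2m}$ should produce a relation among $r$, $\lambda^{|G|}$ and $m$ that can only hold when $|G|=(p-1)/2$. The same relation, or a direct check, should also dispose of the finitely many small primes left over from the character-sum step.

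Once $d=2$ is established, $G=\mathcal R_p$, and \cref{prop:case2} applies verbatim to give $p=11$. I expect the large-$d$ step to be the main obstacle. If the hypergeometric comparison is not decisive, my fallback is to combine Relation Y (\cref{lem:Y-divisibility} with $n=2$, where $P\mid\mathcal R$ and $\deg\mathcal R\le 2<m$ forces $\mathcal R\equiv0$). This gives an explicit polynomial identity in $Q$ alone, which pins down $r$ and $m$ in terms of $p$.
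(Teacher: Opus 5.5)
\textbf{There is a genuine gap: the proposal never proves $d=2$ when $d$ is large.}

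Your character-sum step is fine as an upper bound. One minor correction: when $\psi_1\psi_2$ is trivial, the Jacobi-type sum has modulus $1$, not $\sqrt p$. But this step only excludes $3\le d\lesssim \sqrt p/6$, so the whole content of the corollary is pushed into the range $|G|\ll\sqrt p$. For that range you give an expectation rather than an argument.

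The coefficientwise recursion from \cref{lem:X-global} with $n=2$ is in fact three-term, not two-term. The coefficient of $x^j$ involves $p_j,p_{j+1},p_{j+2}$, because $Q=(x+\lambda)(x+\lambda/r)$ has both $q_1$ and $q_0$ nonzero unless $r=-1$. More importantly, you never exhibit the relation that comparing coefficients of $f=CP^2$ is supposed to produce, nor explain why it would force $|G|=(p-1)/2$.

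\textbf{The missing step is in your fallback, and it handles every $d$ at once, with no Weil bound.} With $n=2$,
\[
\mathcal R=(8m-2)Q'^2-3(6m-1)QQ''+12mQ
\]
has degree at most $2<m=\deg P$, so $P\mid\mathcal R$ forces $\mathcal R\equiv0$.

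You do not need to pin down $r$; just read off the top coefficient. Since $Q$ is monic quadratic,
\[
[x^2]\mathcal R=4(8m-2)-6(6m-1)+12m=8m-2,
\]
so $p\mid 8m-2=4|G|+2$.

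Since $|G|=2m-1$ is odd, $d=(p-1)/|G|$ is even. If $d=4$, then $4|G|+2=p+1$, which $p$ does not divide. If $d\ge6$, then $0<4|G|+2<d|G|+1=p$. Both cases are impossible. Equivalently, since $p$ is odd, $p\mid 2|G|+1\le d|G|+1=p$, with equality only for $d=2$.

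Hence $d=2$ and $G=\mathcal R_p$, and \cref{prop:case2} gives $p=11$. This is exactly the paper's proof. Once your fallback is carried out this way, both the character-sum step and the hypergeometric route become unnecessary.
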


\begin{proof}
Put $s:=|G|=2m-1$ and $d:=(p-1)/s=[\F_p^*:G]$.
Since $s$ is odd and $p-1$ is even, $d$ is even.

Recall that
\[
P(x)=\prod_{b\in B}(x-b) \qquad\text{and}\qquad Q(x)=\prod_{c\in J}(x+c).
\]
By \cref{lem:Y-divisibility}, $P\mid\mathcal R$, setting $n=2$,
\[
\mathcal R(x)=(8m-2)Q'(x)^2-3(6m-1)Q(x)Q''(x)+12mQ(x).
\]
Since $\deg Q=2$, we have $\deg\mathcal R\le2$.
On the other hand, $\deg P=m>2$. Thus the divisibility $P\mid\mathcal R$ forces $\mathcal R\equiv0$.

Since $Q$ is monic of degree $2$, comparison of the coefficient of
$x^2$ gives
\[
0=[x^2]\mathcal R =4(8m-2)-6(6m-1)+12m =8m-2.
\]
Since $s=2m-1$, it follows that $p\mid 4s+2$.

Suppose that $d\ge4$. Since $d$ is even, either $d=4$ or $d\ge6$.
If $d=4$, then $p=4s+1$, so $4s+2=p+1$ is nonzero modulo $p$.
If $d\ge6$, then $0<4s+2<ds+1=p$, which again shows that $4s+2$ is nonzero modulo $p$.
Both cases give a contradiction. Therefore $d=2$.

It follows that $G=\mathcal R_p$. \cref{prop:case2} gives $p=11$.
\end{proof}

\subsection{The case (3): 
$m>n>2$}

\begin{proposition}\label{prop:case3}
Let $p$ be an odd prime, let $G$ be a proper multiplicative subgroup of $\F_p^*$, and let $\lambda\in\F_p^*\setminus G$.
Suppose that $A,B\subseteq\F_p^*$ satisfy $AB=G-\lambda$.
Write $|A|=n>2$ and $|B|=m>n$.
If $|G|+n-1=nm$, then no such sets $A,B$ exist.
\end{proposition}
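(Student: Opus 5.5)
The plan is to exploit the divisibility $P\mid\mathcal R$ from \cref{lem:Y-divisibility}, together with the differential identity of \cref{lem:X-global}. Since $\deg Q=n$, each of the three terms of $\mathcal R$ has degree at most $2n-2$. The first step is to compute the leading coefficient of $\mathcal R$. Using that $Q$ is monic of degree $n$, we get
\[
[x^{2n-2}]\mathcal R=(n-1)\bigl[n^2(3mn+2m-n)-n(n+1)(3mn-n+1)+m(n+1)(n+2)\bigr]=(n-1)(3mn+2m-n).
\]
Since $\deg P=m$, there are two regimes. Either $m>2n-2$, which forces $\mathcal R\equiv 0$. Or $n<m\le 2n-2$, in which case $\mathcal R=P\,S$ with $\deg S\le 2n-2-m\le n-3$.

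\emph{Regime $m>2n-2$.} Here $\mathcal R\equiv0$ gives $p\mid (n-1)(3mn+2m-n)$. Since $n-1<p$, this means $p\mid 3mn+2m-n$. We have $|G|=n(m-1)+1$, and properness of $G$ gives $p\ge 2|G|+1=2nm-2n+3$. Then
\[
2p-(3mn+2m-n)\ge mn-3n-2m+6=(m-3)(n-2)>0,
\]
so $3mn+2m-n=p$. But $|G|$ divides $p-1$, while
\[
p-1-3|G|=2m+2n-4\in(0,|G|),
\]
where the upper bound holds because $|G|-(2m+2n-4)=(n-2)m-3n+5>0$ for $m>n\ge3$. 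This is a contradiction, in the same spirit as the argument of \cref{prop:case2-general}.

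\emph{Regime $n<m\le 2n-2$.} This is the main obstacle. Write $\mathcal R=PS$ and compare coefficients at both ends.

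At $x=0$ we use $P(x)=P(0)+O(x^n)$ from \eqref{eq:delta0-P-gap-middle}. Since $\deg S<n$, this gives $S\equiv \mathcal R/P(0)$ modulo $x^n$. So $S$ is just the truncation of $\mathcal R$ below degree $n$, divided by $P(0)$. Consequently the coefficients of $\mathcal R$ in degrees $\ge \deg S+1$ and $<n$ must vanish, which imposes conditions on the low coefficients of $Q$. Comparing top coefficients then expresses $P$'s top coefficients through those of $Q$.

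Next we feed in \cref{lem:X-global}. Expanded at $x=0$ with $P'=O(x^{n-1})$, it determines the lowest nontrivial coefficient $[x^n]P$ in terms of $Q(0)$; explicitly, $\alpha Q(0)\,n(n-1)[x^n]P=-m(n-1)P(0)$. Expanded at infinity, it recursively determines the coefficients of $P$ from those of $Q$. The goal is to show that these two systems of constraints are incompatible unless some integer expression of size $O(nm)<p$ vanishes in $\F_p$.

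For this last step I would first try the subleading coefficient $[x^{2n-3}]$ and $[x^{m+n-3}]$-type comparisons, or equivalently evaluate the power sums of $J$ and of $B$ via Newton's identities (\cref{lem:newton}, applied as in \cref{prop:case1}). The aim is to reduce to a polynomial identity in $m,n$ that is nonzero as an integer and bounded by $p$. A final size-versus-divisibility comparison, using $|G|\mid p-1$, would then give the contradiction exactly as in the first regime. I expect this bookkeeping, namely choosing which coefficient forces a nonvanishing integer, to be the crux.
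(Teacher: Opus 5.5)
Your regime $m>2n-2$ is correct and is essentially the paper's argument that $\kappa=(n-1)(3mn+2m-n)\neq 0$: with $L=3mn+2m-n$ one gets $0<L<2p$, so $L=p$, and then $3|G|<p-1<4|G|$ contradicts $|G|\mid p-1$. One slip: $(n-2)m-3n+5>0$ is false for general $m>n\ge 3$, since it equals $0$ at $(n,m)=(3,4)$. That case is excluded from this regime because $m>2n-2$, but it matters below.

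The genuine gap is the regime $n<m\le 2n-2$. You yourself call it the main obstacle, and you give only a programme of coefficient comparisons at $0$ and $\infty$ without exhibiting any comparison that yields a contradiction. The missing idea is to evaluate at the roots of $Q$ instead. Write $P'=x^{n-1}U$ (by \eqref{eq:P-derivative-gaps}) and $\mathcal R=PD$. If $Q(\xi)=0$, then \cref{lem:X-global} collapses to $2\xi Q'(\xi)U(\xi)=m(n-1)P(\xi)$, and $\mathcal R(\xi)=\kappa Q'(\xi)^2=P(\xi)D(\xi)$. Eliminate $P(\xi)$ and divide by $Q'(\xi)\neq 0$. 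This shows that $\Theta:=2xUD-m(n-1)\kappa Q'$ vanishes at all $n$ distinct roots of $Q$. When $\kappa\ne0$ we have $\deg\Theta\le 1+(m-n)+(2n-2-m)=n-1$, so $\Theta\equiv0$. This contradicts $[x^{n-1}]\Theta=-m\kappa(n-2)(n+1)\neq 0$.

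This argument needs $\kappa\neq0$, and your size argument supplies that except at $(n,m)=(3,4)$. There $L=41$ and $|G|=10$, and $p=41$ is fully consistent with $|G|\mid p-1$. So $\kappa$ genuinely vanishes in $\F_{41}$, and no ``size-versus-divisibility'' comparison can close this case. The paper disposes of it by hand. In $\F_{41}$ one has $\mathcal R=Q(-13Q''-4x)$, and $\gcd(P,Q)=1$ forces $-13Q''-4x\equiv0$, hence $[x^2]Q=0$. Substituting $P=x^4+ux^3+v$ and $Q=x^3+q_1x+q_0$ into \cref{lem:X-global} then forces $u=q_1=q_0=0$, contradicting $Q(0)\neq0$. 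Your proposal supplies neither the root-of-$Q$ elimination nor this exceptional case.
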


\begin{proof}
Suppose, for contradiction, that such sets $A,B$ exist. Recall that
\[
P(x)=\prod_{b\in B}(x-b)\qquad\text{and}\qquad Q(x)=\prod_{c\in J}(x+c).
\]
Put $s:=|G|=n(m-1)+1$ and $\kappa:=(n-1)(3mn+2m-n)$.
Also recall from \cref{lem:Y-divisibility} that 
\begin{equation}\label{eq:R-unified}
\mathcal R=\kappa Q'^2-(n+1)(3mn-n+1)QQ''+m(n-1)(n+1)(n+2)x^{n-2}Q
\end{equation}
satisfies
\begin{equation}\label{eq:P-divides-R-unified}
P\mid\mathcal R.
\end{equation}

We first show that the existence of $A,B$ is impossible when $\kappa\ne0$ in $\F_p$.
Since $Q$ is monic of degree $n$, a direct calculation gives
\begin{equation}\label{eq:R-leading-coefficient-unified}
[x^{2n-2}]\mathcal R =\kappa n^2 -(n+1)(3mn-n+1)n(n-1) +m(n-1)(n+1)(n+2) = \kappa.
\end{equation}
Thus, if $\kappa\ne0$, then $\deg\mathcal R=2n-2$  and $\operatorname{lc}(\mathcal R)=\kappa$, where $\operatorname{lc}(\mathcal R)$ is the leading coefficient of $\mathcal R$.

If $m>2n-2$, then $\deg\mathcal R=2n-2<m=\deg P$, which contradicts condition \eqref{eq:P-divides-R-unified}.
We may thus assume that $m\le2n-2$.

By equation \cref{eq:P-derivative-gaps}, $x^{n-1}\mid P'(x)$.
Define $U(x):=P'(x)/x^{n-1}$.
Then
\begin{equation}\label{eq:U-degree-leading}
\deg U=m-n \qquad\text{and}\qquad \operatorname{lc}(U)=m.
\end{equation}

By condition \eqref{eq:P-divides-R-unified}, write $\mathcal R(x)=P(x)D(x)$.
It follows from equation \eqref{eq:R-leading-coefficient-unified} that
\begin{equation}\label{eq:D-degree-leading}
\deg D=2n-2-m \qquad\text{and}\qquad \operatorname{lc}(D)=\kappa.
\end{equation}

Let $\xi$ be a root of $Q$. Since $Q$ has distinct roots, we have $Q'(\xi)\ne0$. Moreover, $Q(0)\ne0$, and so $\xi\ne0$.
Evaluating \cref{lem:X-global} at $x=\xi$, we obtain $2Q'(\xi)P'(\xi)=m(n-1)\xi^{n-2}P(\xi)$.
Using $P'(x)=x^{n-1}U(x)$, this becomes
\begin{equation}\label{eq:X-at-root-Q-unified}
2\xi Q'(\xi)U(\xi) = m(n-1)P(\xi).
\end{equation}
On the other hand, evaluating $\mathcal R=PD$ at $x=\xi$ and using definition \cref{eq:R-unified}, we obtain
\begin{equation}\label{eq:Y-at-root-Q-unified}
P(\xi)D(\xi) = \kappa Q'(\xi)^2.
\end{equation}
Multiplying equation \eqref{eq:X-at-root-Q-unified} by $D(\xi)$ and then using equation \eqref{eq:Y-at-root-Q-unified}, we get 
\[2\xi Q'(\xi)U(\xi)D(\xi) = m(n-1)\kappa Q'(\xi)^2.\]
Since $Q'(\xi)\ne0$, it follows that $2\xi U(\xi)D(\xi) = m(n-1)\kappa Q'(\xi)$.

Define $\Theta(x) := 2xU(x)D(x)-m(n-1)\kappa Q'(x)$.
Every root of $Q$ is a root of $\Theta$. Furthermore,
by conditions \eqref{eq:U-degree-leading} and \eqref{eq:D-degree-leading},
\[
\deg\Theta\le 1+(m-n)+(2n-2-m)=n-1<n=\deg Q.
\]
Since $Q$ has $n$ distinct roots, it follows that $\Theta\equiv0$.
Note that the coefficient of $x^{n-1}$ in $\Theta$ is
\[
[x^{n-1}]\Theta=2m\kappa-m(n-1)\kappa n=-m\kappa(n-2)(n+1).
\]
Since $\kappa\ne0$, and $m$, $n-2$, and $n+1$ are all nonzero due to $nm\le p-2$, $[x^{n-1}]\Theta$ is nonzero in $\F_p$. This contradicts $\Theta\equiv0$.

Now, it remains to show that $\kappa$ cannot vanish.
Put $L:=3mn+2m-n$, so that $\kappa=(n-1)L$.
Since $n-1<p$, the equality $\kappa=0$ implies $p\mid L$.

Write $p-1=ds$, and $d=[\F_p^*:G]\ge2$.
Using $s=n(m-1)+1$, we have $L-1-3s=2m+2n-4>0$.
Moreover, unless $(n,m)=(3,4)$, we have $4s-(L-1)=m(n-2)-3n+5>0$.
Indeed, if $n\ge4$, then, by the assumption, $m\ge n+1$, and so
\[
m(n-2)-3n+5\ge(n+1)(n-2)-3n+5=(n-1)(n-3)>0.
\]
If $n=3$ and $m\ge5$, then $m(n-2)-3n+5=m-4>0$.

Thus, unless $(n,m)=(3,4)$, $3s<L-1<4s$.
Since $p\ge2s+1$, it follows that $0<L<2p$.
If $p\mid L$, then necessarily $L=p$, and so
\[
3<\frac{p-1}{s}=\frac{L-1}{s}<4,
\]
contradicting the fact that $(p-1)/s=d$ is an integer.
Thus, $\kappa\ne0$ unless $(n,m)=(3,4)$.

It remains to consider $n=3$ and $m=4$.
In this case $s=10$ and $\kappa=82$.
If $\kappa\ne0$, the preceding argument already gives a contradiction. Thus we only need to consider the case $\kappa=0$. Since $p$ is odd and $p\mid82$, this forces $p=41$.

We now work in $\F_{41}$. By equation \cref{eq:delta0-P-gap-middle}, we may write $P(x)=x^4+u x^3+v$.
Write
\[
Q(x)=x^3+q_2x^2+q_1x+q_0,\qquad q_0\ne0.
\]
For $n=3$, $m=4$, and $p=41$, equation \eqref{eq:R-unified} becomes
\[
\mathcal R(x) = -13Q(x)Q''(x)-4xQ(x) = Q(x)\bigl(-13Q''(x)-4x\bigr).
\]
The polynomials $P$ and $Q$ are relatively prime. Indeed, a common root would give $b=-\lambda/a$ for some $a\in A$ and $b\in B$, and thus $ab+\lambda=0$, contrary to $AB+\lambda=G\subseteq\F_p^*$.

Since $P\mid\mathcal R$ and $\gcd(P,Q)=1$, we have $P\mid -13Q''-4x$.
The polynomial $-13Q''-4x$ has degree at most $1$, whereas $\deg P=4$.
Thus $-13Q''(x)-4x=0$.
Since $Q''(x)=6x+2q_2$, and $-13\cdot6-4=-82=0$ in $\F_{41}$, this identity gives $q_2=0$.
Thus \[Q(x)=x^3+q_1x+q_0.\]

Finally, by \cref{lem:X-global}, we have $\frac43 QP''-2Q'P'=-8xP$.
Substituting the above expressions for $P$ and $Q$, we obtain
\[
\frac43 QP''-2Q'P'+8xP=-2u x^4+8q_1x^3+(2uq_1+16q_0)x^2+(8uq_0+8v)x.
\]
Therefore $u=0$ and $q_1=0$, and finally $q_0=0$.
This contradicts $q_0=Q(0)\ne0$. Then the proposition follows.
\end{proof}

\begin{proof}[Proof of \cref{thm:delta-zero}]
Put $n=|A|$ and $m=|B|$ with $n \le m$.
If $m= n$, then \cref{prop:case1} gives a contradiction.
Thus $m>n$.
If $n>2$, then \cref{prop:case3} gives a contradiction. Thus $n=2$.
By \cref{prop:case2-general}, we have $[\F_p^*:G]=2$ and $p=11$.
Thus $G=\mathcal R_{11}$. Finally, $2m=|G|+1=6$, and thus $m=3$.
\end{proof}

\section*{Acknowledgments}
The author thanks Seoyoung Kim and Chi Hoi Yip for helpful discussions, and is especially grateful to Chi Hoi Yip for pointing out the known estimates on the sizes of the factors and for suggesting the symmetric formulation. The author was supported by the Institute for Basic Science (IBS-R029-C1). 
The author would also like to acknowledge that GPT-5.5, 5.6 Thinking were used for assistance in the writing process of this manuscript and for checking
algebraic computations. 
All proofs and their mathematical ideas were given by the author, and the author takes full responsibility for the entire content of the paper.

\bibliographystyle{abbrv}
\bibliography{references}

\end{document}